\newlength{\defbaselineskip}
\newtheorem{theorem}{Theorem}[section]
\newtheorem{example}{Example}[section]
\newtheorem{lemma}{Lemma}[section]
\newtheorem{definition}[theorem]{Definition}
\newtheorem{remark}{Remark}[section]
\numberwithin{equation}{section}
\newcommand\norm[1]{\left\lVert#1\right\rVert}
\begin{document}
\title{Convergence of the summation-integral type operators via statistically
}
\maketitle
\begin{center}
{\bf Rishikesh Yadav$^{1,\dag}$,  Ramakanta Meher$^{1,\star}$,  Vishnu Narayan Mishra$^{2,\circledast}$}\\
$^{1}$Applied Mathematics and Humanities Department,
Sardar Vallabhbhai National Institute of Technology Surat, Surat-395 007 (Gujarat), India.\\
$^{2}$Department of Mathematics, Indira Gandhi National Tribal University, Lalpur, Amarkantak-484 887, Anuppur, Madhya Pradesh, India\\
\end{center}
\begin{center}
$^\dag$rishikesh2506@gmail.com,  $^\star$meher\_ramakanta@yahoo.com,
 $^\circledast$vishnunarayanmishra@gmail.com
\end{center}

\vskip0.5in

\begin{abstract}

Our main aim is to investigate the approximation properties for the summation integral type operators in statistical sense. In this regard, we prove the statistical convergence theorem using well known
Korovkin theorem and the degree of approximation is determined. Also using weight function, the weighted statistical convergence theorem with the help of Korovkin theorem is obtained. The statistical rate of convergence in the terms of modulus of continuity and function belonging to the Lipschitz class are obtained. To support the convergence results of the proposed operators to the function, 
 graphical representations take place and a comparison is shown with Sz\'asz-Mirakjan-Kantorovich operators through examples. The last section deals with, a bivariate extension of the proposed operators to study the rate of convergence for the function of two variables, additionally, convergence of the bivariate operators is shown with graphically.

\end{abstract}
\subjclass \textbf{MSC 2010}: {41A25, 41A35, 41A36}.


\textbf{Keywords:} Sz$\acute{\text{a}}$sz-Mirakjan operators, Sz$\acute{\text{a}}$sz-Mirakjan-Kantorovich operators, Korovkin-type approximation results,  statistical convergence, modulus of continuity,  weighted modulus of continuity.

\section{Introduction}

First of all, Fast \cite{HF} introduced statistical convergence and further investigated by   Steinhaus \cite{HS}, in that order  Schoenberg \cite{IJ} reintroduced as well as gave some basics properties and studied the summability theory of the  statistical convergence.

Nowadays, statistical convergence has become an area, which is broad and also very active, even though it has been introduced over fifty years ago and this is being used very frequently in many areas, we refer to some citations as \cite{OD,AZ,IJ1,JC,JC1,JC2}.
 Also, this area is being concerned to investigate the approximation properties of quantum calculus. For instance, statistical approximation properties are used for studying in various papers \cite{VG1,MO,AA,SE,HA,NM,CR}.
 These mentioned operators were investigated via statistical convergence  as well.

In 2003, Duman \cite{OD1}, studied the  $A$-statistical convergence of the linear positive operators for function belonging to the space of all $2\pi$-periodic and continuous functions on whole real line, where $A$ represents non-negative regular summability matrix. 

Since the statistical type of convergence has not been examined so far in the theory of approximation. So the main objective of this paper is to investigate the theory of approximation by considering the  statistical convergence. Here the Korovkin theory is considered to deals with the approximation of function $f$ by the operators   $\{\tilde{S}_{n,a}^*(f;x)\}$ \cite{RY1}  for the summation integral type operators, which are as follows.




\begin{eqnarray}\label{NO}
\tilde{S}_{n,a}^*(f;x)=n\sum\limits_{k=0}^{\infty}s_n^a(x)\int\limits_{\frac{k}{n}}^{\frac{k+1}{n}}f(u)\,du,~~f\in C[0,\infty),~\forall~ x\in[0,\infty), n\in\mathbb{N}.
\end{eqnarray}
where, $f\in C[0,\infty)$, for all $[0,\infty)$ and $n\in\mathbb{N}$,
and studied the Sz$\acute{\text{a}}$sz-Mirakjan- Kantorovich type operators along with its rate of convergence in the sense of local approximation results with the help of modulus of smoothness, second order modulus of continuity, Peetres K-functional and functions belonging to the Lipschitz class. Further, for computing the order of approximation of the operators, we discuss the weighted approximation properties by using the weighted modulus of continuity and prove the theorem.\\
Here it is considered some Lemmas those will be useful  in the further study of theorem. Consider the function $e_i=x^i$, $i=0\cdots4$, we yield a lemma.

\begin{lemma}\cite{RY1}\label{L1}\em{
For each $x\in[0,\infty)$ and $a>1$ fixed, we have
\begin{eqnarray*}
&& 1.~\tilde{S}_{n,a}^*(e_0;x)=1,\\
&& 2.~\tilde{S}_{n,a}^*(e_1;x)=\frac{1}{2n}+\frac{x\log{a}}{\left(-1+a^{\frac{1}{n}}\right)n},\\ 
&& 3.~\tilde{S}_{n,a}^*(e_2;x)=\frac{1}{3n^2}+\frac{2x\log{a}}{\left(-1+a^{\frac{1}{n}}\right)n^2}+\frac{x^2(\log{a})^2}{\left(-1+a^{\frac{1}{n}}\right)^2n^2},\\
&& 4.~\tilde{S}_{n,a}^*(e_3;x)=\frac{1}{4n^3}+\frac{7}{2}\frac{x\log{a}}{\left(-1+a^{\frac{1}{n}}\right)n^3}+\frac{9}{2}\frac{x^2(\log{a})^2}{\left(-1+a^{\frac{1}{n}}\right)^2n^3}+\frac{x^3(\log{a})^3}{\left(-1+a^{\frac{1}{n}}\right)^3n^3}.
\end{eqnarray*}}
\end{lemma}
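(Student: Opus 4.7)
The plan is to reduce each Kantorovich moment to a moment of the underlying discrete Sz\'asz-type operator $\tilde{S}_{n,a}(f;x) = \sum_{k=0}^{\infty} s_{n,k}^{a}(x)f(k/n)$, whose moments for $e_0,e_1,e_2,e_3$ are available from \cite{RY1} (and follow from differentiating the generating function $\sum_{k} \lambda^{k}/k! = e^{\lambda}$ with parameter $\lambda = nL$, where $L:=\frac{x\log a}{(a^{1/n}-1)n}$). The $e_0$ identity is immediate since $n\int_{k/n}^{(k+1)/n}du = 1$ and the basis sums to $1$. For $i\ge 1$, the reduction I have in mind is
\begin{equation*}
n\int_{k/n}^{(k+1)/n} u^{i}\,du \;=\; n\int_{0}^{1/n}\Bigl(\tfrac{k}{n}+t\Bigr)^{i}dt \;=\; \sum_{j=0}^{i}\binom{i}{j}\frac{1}{(j+1)\,n^{j}}\Bigl(\tfrac{k}{n}\Bigr)^{i-j},
\end{equation*}
obtained from the substitution $u=k/n+t$ and the binomial theorem. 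Multiplying by $s_{n,k}^{a}(x)$ and summing over $k$ gives the master formula
\begin{equation*}
\tilde{S}_{n,a}^{*}(e_{i};x) \;=\; \sum_{j=0}^{i}\binom{i}{j}\frac{1}{(j+1)\,n^{j}}\;\tilde{S}_{n,a}(e_{i-j};x).
\end{equation*}

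Next, I would insert the known discrete moments, which in terms of $L$ take the clean form
\begin{equation*}
\tilde{S}_{n,a}(e_{0};x)=1,\quad \tilde{S}_{n,a}(e_{1};x)=L,\quad \tilde{S}_{n,a}(e_{2};x)=L^{2}+\tfrac{L}{n},\quad \tilde{S}_{n,a}(e_{3};x)=L^{3}+\tfrac{3L^{2}}{n}+\tfrac{L}{n^{2}}.
\end{equation*}
For $i=1$ the master formula gives $L+\tfrac{1}{2n}$, matching the stated identity; for $i=2$ it gives $\tilde{S}_{n,a}(e_{2};x)+\tfrac{1}{n}\tilde{S}_{n,a}(e_{1};x)+\tfrac{1}{3n^{2}}$, which collapses to the claimed expression once one observes that the coefficient of $L$ from the two contributions combines to $\tfrac{1}{n}+\tfrac{1}{n}=\tfrac{2}{n}$ (after absorbing the $L/n$ factor from $\tilde S_{n,a}(e_2;x)$ into the $L^{2}$-plus-$L/n$ expansion and rewriting in $x$ via $L=\frac{x\log a}{(a^{1/n}-1)n}$). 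The $i=3$ case proceeds in the same way: summing $\binom{3}{j}/((j+1)n^{j})$ against $\tilde{S}_{n,a}(e_{3-j};x)$ for $j=0,1,2,3$ and collecting like powers of $L$ yields the coefficients $1,\, 9/2,\, 7/2,\, 1/4$ in front of $L^{3}, L^{2}, L, L^{0}/n^{3}$ respectively, exactly as asserted.

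The main obstacle is purely bookkeeping in the $i=3$ case: three different moments of the discrete operator contribute to the $L$-coefficient (namely the $L/n^{2}$ piece of $\tilde S_{n,a}(e_{3};x)$, the $L/n$ piece of $\tilde S_{n,a}(e_{2};x)$ scaled by $3/(2n)$, and $\tilde S_{n,a}(e_{1};x)$ scaled by $1/n^{2}$), and similarly two contributions produce the $L^{2}$-coefficient. Careful collection of these terms, together with the explicit back-substitution $L=\frac{x\log a}{(a^{1/n}-1)n}$ to restore the $x$-variable in the final expressions, completes the proof. I would present the computation by tabulating the master formula for each $i\in\{0,1,2,3\}$ and then matching coefficients, rather than manipulating the binomial sums in closed form.
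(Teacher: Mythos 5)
Your proposal is correct, and every coefficient checks out: writing $L=\frac{x\log a}{(a^{1/n}-1)n}$, the basis functions $s_{n,k}^{a}(x)=a^{-x/(a^{1/n}-1)}\frac{(x\log a)^{k}}{(a^{1/n}-1)^{k}k!}$ are exactly the Poisson weights $e^{-nL}(nL)^{k}/k!$, so the discrete moments you quote are the standard Poisson moments $\lambda,\ \lambda^{2}+\lambda,\ \lambda^{3}+3\lambda^{2}+\lambda$ divided by the appropriate power of $n$, and your master formula $\tilde{S}_{n,a}^{*}(e_{i};x)=\sum_{j=0}^{i}\binom{i}{j}\frac{1}{(j+1)n^{j}}\tilde{S}_{n,a}(e_{i-j};x)$ then reproduces the four stated identities; in particular for $i=3$ the $L/n^{2}$-coefficient is $1+\tfrac{3}{2}+1=\tfrac{7}{2}$ and the $L^{2}/n$-coefficient is $3+\tfrac{3}{2}=\tfrac{9}{2}$, as claimed. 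Note that this paper gives no proof of the lemma at all --- it is stated with a citation to \cite{RY1} --- so there is no in-paper argument to compare against; your reduction of the Kantorovich moments to the moments of the underlying discrete Sz\'asz-type operator is a clean, self-contained route and is essentially the standard derivation one would expect in the cited source. The only cosmetic point is that in your final tabulation the monomials carrying the coefficients $\tfrac{9}{2}$ and $\tfrac{7}{2}$ are $L^{2}/n$ and $L/n^{2}$ rather than bare $L^{2}$ and $L$; after the back-substitution for $L$ these become the $\frac{x^{2}(\log a)^{2}}{(-1+a^{1/n})^{2}n^{3}}$ and $\frac{x\log a}{(-1+a^{1/n})n^{3}}$ terms of the statement, so the bookkeeping is consistent.
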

To find central moments for the defined operators, we have   
 \begin{lemma}\cite{RY1}\label{L2}\em{
For each $x\geq0$, we have
\begin{eqnarray*}
&&1.~\tilde{S}_{n,a}^*(\xi_x(u);x)=-\frac{(-1+2nx)}{2n}+\frac{x\log{a}}{n(-1+a^{\frac{1}{n}})},\\
&&2.~ \tilde{S}_{n,a}^*(\xi_x^2(u);x)=\frac{(1-3nx+3n^2x^2)}{3n^2}-\frac{2(-1+a^{\frac{1}{n}})(-1+nx)x\log{a}}{\left(-1+a^{\frac{1}{n}}\right)^2n^2}+\frac{x^2(\log{a})^2}{\left(-1+a^{\frac{1}{n}}\right)^2n^2},\\
&&3.~\tilde{S}_{n,a}^*(\xi_x^3(u);x) = -\frac{(-1+4nx-6n^2x^2+4n^3x^3)}{4n^3}+\frac{x(7-12nx+6n^2x^2)\log{a}}{2\left(-1+a^{\frac{1}{n}}\right) n^3} \\&&\hspace{3 cm}-\frac{3x^2(-3+2nx)(\log{a})^2+4x^3(\log{a})^3}{2\left(-1+a^{\frac{1}{n}}\right)^2n^3},\\
&&4.~\tilde{S}_{n,a}^*(\xi_x^4(u);x) = \frac{1}{5\left(-1+a^{\frac{1}{n}} \right)^4n^4}\Bigg(\left(-1+a^{\frac{1}{n}}\right)^4(1-5nx+10n^2x^2-10n^3x^3+5n^4x^4) \\&&\hspace{3 cm}-10\left(-1+a^{\frac{1}{n}}\right)^3 x(-3+7nx-6n^2x^2+2n^3x^3)\log{a}\\&&\hspace{3 cm}+15\left(-1+a^{\frac{1}{n}}\right)^2 x^2 (5-6nx+2n^2x^2)(\log{a})^2\\&&\hspace{3 cm}-20\left(-1+a^{\frac{1}{n}}\right) x^3(-2+nx)(\log{a})^3 +5x^4(\log{a})^4\Bigg).
\end{eqnarray*}}
where,  $\xi_x(t)=(u-x)^i$, $i=1,2,3\cdots$
\end{lemma}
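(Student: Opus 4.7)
The proof is a direct calculation based on the linearity of $\tilde{S}_{n,a}^*$ and the binomial theorem. For each $i\in\{1,2,3,4\}$, expand
\[
\xi_x^i(u)=(u-x)^i=\sum_{j=0}^{i}\binom{i}{j}(-x)^{i-j}u^j,
\]
so by linearity
\[
\tilde{S}_{n,a}^*(\xi_x^i;x)=\sum_{j=0}^{i}\binom{i}{j}(-x)^{i-j}\tilde{S}_{n,a}^*(e_j;x).
\]
The strategy is then simply to substitute the values of $\tilde{S}_{n,a}^*(e_j;x)$ supplied by Lemma~\ref{L1} and to simplify.

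More concretely, for (1) I would compute $\tilde{S}_{n,a}^*(e_1;x)-x\,\tilde{S}_{n,a}^*(e_0;x)$ and regroup to obtain $-(2nx-1)/(2n)+x\log a/(n(a^{1/n}-1))$. For (2), I would use
\[
\tilde{S}_{n,a}^*(\xi_x^2;x)=\tilde{S}_{n,a}^*(e_2;x)-2x\tilde{S}_{n,a}^*(e_1;x)+x^2\tilde{S}_{n,a}^*(e_0;x),
\]
sort the resulting sum by powers of $\log a$ (namely $(\log a)^0$, $(\log a)^1$ and $(\log a)^2$), and pull out the common factors $1/n^2$ and $1/(a^{1/n}-1)^2$; the constant-in-$\log a$ block collapses to the polynomial in $nx$ shown in the statement. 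Parts (3) and (4) follow the same template with one and two more terms respectively. Note that the fourth central moment requires $\tilde{S}_{n,a}^*(e_4;x)$, which is not listed among the four items of Lemma~\ref{L1}; I would either infer that item from the preamble "$e_i=x^i$, $i=0\cdots4$" or derive it directly from definition (\ref{NO}) by the same series-identity argument used for the earlier moments, namely by differentiating the generating identity $\sum_{k\geq 0}s_n^a(x)=1$ and using $\int_{k/n}^{(k+1)/n}u^j\,du=((k+1)^{j+1}-k^{j+1})/((j+1)n^{j+1})$.

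The principal obstacle is not conceptual but purely algebraic: organizing the fourth moment. After substitution one has a sum with common denominator $5(a^{1/n}-1)^4 n^4$, and to reach the compact form displayed in the lemma one must regroup the resulting polynomial first by powers of $\log a$ (from $0$ through $4$), then within each block by powers of $x$ (from $x^0$ through $x^4$), and finally verify that the coefficients agree with those in the stated expression. The weights $1,-10,15,-20,5$ appearing in front of the $(\log a)^k$ blocks arise naturally from the binomial coefficients of $(u-x)^4$ combined with the constant factor $\tfrac{1}{5}$ produced by the integration of $u^4$; tracking these signs and the $(a^{1/n}-1)$-denominator powers carefully is the only delicate step.
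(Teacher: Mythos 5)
Your proposal is correct, and it is the natural (essentially the only) route: expand $(u-x)^i$ by the binomial theorem, use linearity, and substitute the raw moments of Lemma \ref{L1}. Note that the paper itself supplies no proof here --- Lemma \ref{L2} is imported verbatim from \cite{RY1} --- so there is no in-paper argument to compare against; your plan is what that reference carries out. You also correctly spotted the one real gap in the setup: the fourth central moment requires $\tilde{S}_{n,a}^*(e_4;x)$, which is announced in the preamble ($e_i=x^i$, $i=0\cdots 4$) but absent from the four items actually listed in Lemma \ref{L1}, so it must be computed from the definition (\ref{NO}) exactly as you describe, via $\int_{k/n}^{(k+1)/n}u^4\,du=((k+1)^5-k^5)/(5n^5)$ and the moment identities for the weights $s_n^a(x)$. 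The only caveat is cosmetic: your remark that the coefficients $1,-10,15,-20,5$ ``arise naturally from the binomial coefficients of $(u-x)^4$'' oversells the transparency of the bookkeeping --- they emerge only after recombining all five raw moments over the common denominator $5(-1+a^{1/n})^4n^4$ --- but since you flag this regrouping as the delicate step and propose to verify the coefficients explicitly, the argument is sound.
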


\section{Korovkin and Weierstrass type statistical theorem}
If $\{\mathcal{O}_n(f;x)\}$ is a sequence of linear positive  operators such that the sequence $\{\mathcal{O}_n(1;x)\}$,  $\{\mathcal{O}_n(t;x)\}$, $\{\mathcal{O}_n(t^2;x)\}$ converge uniformly  to $1,~x,~x^2$ respectively in the defined  interval $[a,b]$, then it  implies that the sequence   $\{\mathcal{O}_n(f;x)\}$  converges to the function $f$ uniformly  provided $f$ is bounded and continuous in the interval $[a,b]$.

 Before proceeding to statistical convergence, here  a brief concept of statistical convergence is considered.

\begin{definition}
Consider a set $\mathcal{K}\subseteq \mathbb{N}$, such that $K_n=\{k\in \mathcal{K}: k\leq n \}$, where $n\in\mathbb{N}$.
Then the natural density $d(\mathcal{K})$ of a set $\mathcal{K}$ is defined as
\begin{eqnarray}
\underset{n\to\infty}\lim\frac{1}{n}|K_n|,
\end{eqnarray} 
provided the limit exists. Here $|K_n|$ represents the cardinality of the set $K_n$.
\end{definition}

\begin{definition}
Let $p\in\mathbb{R}$, a sequence $\{x_n\}$ is said to be convergent statistically to $p$, if for each $\epsilon>0$, we have
\begin{eqnarray}
d(\{n\leq k: |x_n-p| \geq\epsilon\})=0
\end{eqnarray}
i.e.
\begin{eqnarray}
\underset{n}\lim\frac{1}{n}|\{n\leq k: |x_n-p| \geq\epsilon\}|=0,~~~~or,~~ |x_n-p|<\epsilon,~~~~\text{for almost~all}~k.
\end{eqnarray}
\end{definition}

By using the properties of statistical convergence, here we shall prove the Korovkin theorem and the Weierstrass type approximation theorem.

In  \cite{PP}, it is proved that the classical Korovkin theorem, according to that theorem, let $A_n(f;x)$ be linear positive operators  
defined on the set of all continuous and bounded function $C_B[a,b]$ to $C[a,b]$, be set of all continuous function defined on $[a,b]$ for which, the condition \label{T1}
\begin{eqnarray*}
 \underset{n}\lim\|A_n(e_i;x)-e_i\|_{C[a,b]}&=&0,~~~~~~~~~~~~\text{where}~e_i=x^i,~~i=0,~1,~2,
\end{eqnarray*}
satisfy, then for any function $f\in C[a,b]$, 
\begin{eqnarray*}
\underset{n}\lim\|S_n(f;x)-f(x)\|_{C[a,b]}=0,~~~\text{as}~n\to\infty.
\end{eqnarray*}
\begin{theorem}\cite{AD}\label{T2}
Let $P_n$ be a sequence of positive linear operators defined on $C_B[a,b]$ to $C[a,b]$ and if it satisfies the conditions  
 \begin{eqnarray*}
 st-\underset{n}\lim\|P_n(e_i;x)-e_i\|_{C[a,b]}&=&0,\hspace{2cm} \text{where}~~ i=0,1,2,
\end{eqnarray*}
then for each function $f\in C_\mathcal{B}[a,b]$, we have   
\begin{eqnarray*}
st- \underset{n}\lim\|P_n(f;x)-f(x)\|_{C[a,b]}=0. 
\end{eqnarray*}
\end{theorem}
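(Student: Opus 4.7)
The plan is to follow the classical Korovkin template, but to make every estimate at the level of sets of natural density so that the three statistical hypotheses on $e_0,e_1,e_2$ can be combined into a single statistical conclusion for $f$.

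First I would localize as in the ordinary Korovkin proof: since $f\in C_B[a,b]$ is uniformly continuous on $[a,b]$, for any $\varepsilon>0$ there exists $\delta>0$ with $|f(t)-f(x)|<\varepsilon$ whenever $|t-x|<\delta$, and on the complementary range $|t-x|\geq\delta$ I use the global bound $|f(t)-f(x)|\leq 2M$ where $M=\|f\|_{C_B[a,b]}$ to obtain the standard pointwise majorant $|f(t)-f(x)|\leq \varepsilon + \frac{2M}{\delta^{2}}(t-x)^{2}$. Applying $P_n$ (positive and linear) and adding/subtracting $f(x)P_n(e_0;x)$ then yields, for every $x\in[a,b]$,
\begin{eqnarray*}
|P_n(f;x)-f(x)| &\leq& \varepsilon\,P_n(e_0;x) + M\,|P_n(e_0;x)-1| \\
&& +\,\frac{2M}{\delta^{2}}\bigl|P_n(e_2;x)-2x\,P_n(e_1;x)+x^{2}P_n(e_0;x)\bigr|.
\end{eqnarray*}

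The next step is to expand the central-moment term and collect coefficients, using that $|x|\leq \max(|a|,|b|)=:c$ on the compact interval, so that
\begin{eqnarray*}
\|P_n(f;\cdot)-f\|_{C[a,b]} \leq \varepsilon + K_{0}\|P_n(e_0;\cdot)-e_0\|_{C[a,b]} + K_{1}\|P_n(e_1;\cdot)-e_1\|_{C[a,b]} + K_{2}\|P_n(e_2;\cdot)-e_2\|_{C[a,b]},
\end{eqnarray*}
where $K_0,K_1,K_2$ depend only on $M,\delta,c$. Setting $\varepsilon' = \varepsilon$ and choosing an arbitrary threshold $r>\varepsilon'$, the event
\begin{eqnarray*}
D_r := \bigl\{ n\in\mathbb{N}:\,\|P_n(f;\cdot)-f\|_{C[a,b]}\geq r\bigr\}
\end{eqnarray*}
is contained in $D_{0}\cup D_{1}\cup D_{2}$, where
\begin{eqnarray*}
D_i := \Bigl\{ n\in\mathbb{N}:\,\|P_n(e_i;\cdot)-e_i\|_{C[a,b]}\geq \tfrac{r-\varepsilon'}{3K_i}\Bigr\},\qquad i=0,1,2.
\end{eqnarray*}

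The final step uses the definition of statistical convergence: by hypothesis $d(D_0)=d(D_1)=d(D_2)=0$, and the finite union of density-zero sets still has density zero, hence $d(D_r)=0$. Since $r>\varepsilon'$ was arbitrary (and $\varepsilon'$ can be taken arbitrarily small), this is exactly $st\text{-}\lim_n \|P_n(f;\cdot)-f\|_{C[a,b]}=0$. The only subtle point, and the step I would be most careful about, is justifying the set inclusion $D_r\subseteq D_0\cup D_1\cup D_2$: this requires that whenever $n\notin D_0\cup D_1\cup D_2$, the pointwise estimate above gives $\|P_n(f;\cdot)-f\|_{C[a,b]}<\varepsilon'+3\cdot\tfrac{r-\varepsilon'}{3}=r$, which is immediate once the constants $K_i$ are tracked correctly. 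Everything else is bookkeeping once the classical Korovkin inequality has been written in the form displayed above.
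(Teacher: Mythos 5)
Your proof is correct, and it is essentially the standard argument for this theorem. Note that the paper itself offers no proof of Theorem \ref{T2}: it is quoted as a known result of Gadjiev and Orhan \cite{AD}, and the argument you reconstruct --- the Korovkin majorant $|f(t)-f(x)|\le \varepsilon+\tfrac{2M}{\delta^{2}}(t-x)^{2}$, the expansion of the second central moment in terms of $e_0,e_1,e_2$, and the covering of $\{n:\|P_n f-f\|\ge r\}$ by three density-zero sets --- is precisely the proof given in that reference, so there is nothing to contrast.
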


With these correlation, we have
\begin{theorem}\label{T3}
Let $\{\tilde{S}_{n,a}^*\}$ be the sequence defined by (\ref{NO}), then for every $f\in C_B[0,l],~~~l>0$ such that
$$st- \underset{n}\lim\|\tilde{S}_{n,a}^*(f;x)-f(x)\|=0, $$
where $C_B[0,l]$ is the space of all continuous and bounded function defined on $[0,l]$ with the norm 

$$\Vert f \Vert=\underset{0\leq x\leq l}\sup|f(x)|$$
\end{theorem}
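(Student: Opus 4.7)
The plan is to reduce the claim to the statistical Korovkin theorem (Theorem 2.2). It will suffice to verify the three moment conditions $st-\underset{n}\lim\|\tilde{S}_{n,a}^*(e_i;x)-e_i\|_{C[0,l]} = 0$ for $i = 0, 1, 2$, and then invoke Theorem 2.2 to obtain the conclusion for arbitrary $f \in C_B[0,l]$. The case $i=0$ is immediate from Lemma 1.1, since $\tilde{S}_{n,a}^*(e_0;x) \equiv 1$.

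The main computation is the asymptotic analysis of the scalar $\alpha_n := \frac{\log a}{n(a^{1/n}-1)}$, which appears throughout the formulas of Lemma 1.1. Using the Taylor expansion $a^{1/n} = 1 + \frac{\log a}{n} + \frac{(\log a)^2}{2n^2} + O(n^{-3})$, I would show that $\alpha_n \to 1$ as $n \to \infty$, and in particular that $\alpha_n$ remains bounded uniformly in $n$. With this shorthand, Lemma 1.1 rewrites as
\begin{align*}
\tilde{S}_{n,a}^*(e_1;x) - x &= \tfrac{1}{2n} + x(\alpha_n - 1), \\
\tilde{S}_{n,a}^*(e_2;x) - x^2 &= \tfrac{1}{3n^2} + \tfrac{2x}{n}\alpha_n + x^2(\alpha_n^2 - 1).
\end{align*}
On the compact interval $[0,l]$ each right-hand side is bounded by a combination of $O(1/n)$ terms and the factors $|\alpha_n - 1|$, $|\alpha_n^2 - 1|$, all of which tend to $0$. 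Hence the sup-norm convergence is even ordinary (and therefore, a fortiori, statistical).

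With the three Korovkin-type hypotheses verified, Theorem 2.2 immediately delivers $st-\underset{n}\lim\|\tilde{S}_{n,a}^*(f;x) - f(x)\|_{C[0,l]} = 0$ for every $f \in C_B[0,l]$. There is no substantive obstacle beyond the Taylor estimate for $\alpha_n$ and the uniform bounding of the elementary terms on $[0,l]$; everything else is a direct application of Theorem 2.2 to the explicit moment formulas of Lemma 1.1.
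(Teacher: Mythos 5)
Your proposal is correct and follows essentially the same route as the paper: both verify the Korovkin test conditions for $e_0,e_1,e_2$ from Lemma \ref{L1} and then invoke Theorem \ref{T2}. The only cosmetic difference is that the paper carries out the standard set-density decomposition ($O\subseteq O'\cup O''$, $P\subseteq P_1\cup P_2\cup P_3$) to certify the statistical limits, whereas you observe directly that $\frac{\log a}{n(a^{1/n}-1)}\to 1$ gives ordinary uniform convergence on $[0,l]$, which implies the statistical version a fortiori.
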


\begin{proof}
By (1) of Lemma \ref{L1}, we easily get
\begin{eqnarray}\label{E1}
st-\underset{n}\lim\|\tilde{S}_{n,a}^*(e_0;x)-e_0\|=0
\end{eqnarray}
Now by (2) of Lemma (\ref{L1}), we have 
\begin{eqnarray*}
|S_n(e_1;x)-x\| &=& \norm{\frac{1}{2n}+\frac{x}{\left(-1+a^{\frac{1}{n}}\right)n}\log{a}-x} \\
&\leq & \left|\frac{1}{2n}\right|+\left|\left\{\frac{1}{\left(-1+a^{\frac{1}{n}}\right)n}\log{a}-1\right\}l\right|,
\end{eqnarray*}

define the sets, for any $\epsilon>0$ as:

$$O=\{n:\|\tilde{S}_{n,a}^*(e_1;x)-x\| \geq \epsilon\}$$ 
and
\begin{eqnarray*}
O' &=& \bigg\{n:\frac{1}{2n}\geq \frac{\epsilon}{2}\bigg\}\\
O'' &=& \bigg\{n: \left(\frac{\log{a}}{\left(-1+a^{\frac{1}{n}}\right)n}-1\right)l\geq \frac{\epsilon}{2 }\bigg\},
\end{eqnarray*}
and $O\subseteq O'\cup O''$ and it can be expressed as 
\begin{scriptsize}
\begin{eqnarray}\label{A}
\nonumber d\left\{n\leq k:\|\tilde{S}_{n,a}^*(e_1;x)-x\|\geq \epsilon\ \right\}&\leq & d\left\{n\leq k:\frac{1}{2n}\geq\frac{\epsilon}{2}\ \right\}\\&& + d\left(n\leq k: \left(\frac{\log{a}}{\left(-1+a^{\frac{1}{n}}\right)n}-1\right)l\geq \frac{\epsilon}{2 } \right)
\end{eqnarray}
\end{scriptsize}
But since

\begin{eqnarray*}
st-\underset{n}\lim \left(\frac{1}{2n}\right)=0~~~~~~~~~~\text{and}~~st-\underset{n}\lim \left(\frac{\log{a}}{\left(-1+a^{\frac{1}{n}}\right)n}-1\right)=0,
\end{eqnarray*}
hence, by Inequality \ref{A}, it follows
\begin{eqnarray}\label{E2}
st-\underset{n}\lim\|S_n(e_1;x)-x\|=0.
\end{eqnarray}
Similarly,

\begin{eqnarray*}
\|\tilde{S}_{n,a}^*(e_2;x)-e_2\|&=& \norm{\frac{1}{3n^2} + \frac{2x\log{a}}{\left(-1+a^{\frac{1}{n}}\right)n^2} + \frac{x^2(\log{a})^2}{\left(-1+a^{\frac{1}{n}}\right)^2n^2}-x^2} \\
&\leq & \left|\frac{1}{3n^2}\right|+\left|\frac{2\log{a}}{\left(-1+a^{\frac{1}{n}}\right)n^2}\right|~l + \left|\left( \frac{(\log{a})^2}{\left(-1+a^{\frac{1}{n}}\right)^2n^2}-1\right)\right|~l^2\\ 
&\leq & m^2\left(    \frac{1}{3n^2}+\frac{2\log{a}}{\left(-1+a^{\frac{1}{n}}\right)n^2} + \Big( \frac{(\log{a})^2}{\left(-1+a^{\frac{1}{n}}\right)^2n^2}-1\Big)  \right),
\end{eqnarray*}
where $m^2=\max\{1,~l,~l^2\}$, 
i.e. 
\begin{eqnarray}
\|\tilde{S}_{n,a}^*(e_2;x)-e_2\|\leq m^2\left\{    \frac{1}{3n^2}+\frac{2\log{a}}{\left(-1+a^{\frac{1}{n}}\right)n^2} + \left\{ \frac{1}{\left(-1+a^{\frac{1}{n}}\right)^2n^2}(\log{a})^2-1\right\}  \right\}.
\end{eqnarray}
So again by defining the following sets and for any $\epsilon>0$, one can find
\begin{eqnarray}
P &=&\{n: \|\tilde{S}_{n,a}^*(e_2;x)-e_2\|\geq\epsilon \}\\
P_1 &=& \{n: \frac{1}{3n^2}\geq \frac{\epsilon}{3m^2} \},\\
P_2 &=& \{n: \frac{2\log{a}}{\left(-1+a^{\frac{1}{n}}\right)n^2}\geq \frac{\epsilon}{3m^2} \}\\
P_3 &=& \left\{n: \left\{ \left(\frac{\log{a}}{\left(-1+a^{\frac{1}{n}}\right)n} \right)^2-1 \right\} \geq\frac{\epsilon}{3m^2} \right\}.
\end{eqnarray} 
where $P\subseteq P_1\cup P_2\cup P_3$, it gives

\begin{eqnarray}\label{IN1}
&&\nonumber d \{n\leq k: \|\tilde{S}_{n,a}^*(e_2;x)-e_2\|\geq\epsilon \}\leq d\left\{n\leq k: \frac{1}{3n^2}\geq \frac{\epsilon}{3m^2} \right\}\\&&\hspace{2cm}+d \left\{n\leq k: \frac{2\log{a}}{\left(-1+a^{\frac{1}{n}}\right)n^2}\geq \frac{\epsilon}{3m^2}\right \}\nonumber\\&&\hspace{2cm} +d \left\{n\leq k: \left( \left(\frac{\log{a}}{\left(-1+a^{\frac{1}{n}}\right)n} \right)^2-1 \right) \geq\frac{\epsilon}{3m^2} \right\}
\end{eqnarray}
Hence
\begin{eqnarray}\label{IN2}
st-\lim{\alpha_n}=0=st-\lim{\beta_n}=st-\lim{\gamma_n},
\end{eqnarray}
where,
\begin{eqnarray*}
\alpha_n=\frac{1}{3n^2},~~\beta_n=\frac{2\log{a}}{\left(-1+a^{\frac{1}{n}}\right)n^2},~~\gamma_n=\left( \left(\frac{\log{a}}{\left(-1+a^{\frac{1}{n}}\right)n} \right)^2-1 \right).
\end{eqnarray*}
So by \ref{IN1} and \ref{IN2}, we have
\begin{eqnarray}\label{E3}
st-\lim\|\tilde{S}_{n,a}^*(e_2;x)-e_2\|=0
\end{eqnarray} 
Hence proved.
\end{proof}


Now there is  an example which satisfy Theorem \ref{T3}, but not the classical Korovkin theorem. 
\begin{example}
Consider a sequence of  linear positive operators $T_n(f;x)$ which are defined on $C_B[0,l]$ by $T_n(f;x)=(1+u_n)\tilde{S}_{n,a}^*$, where  $\tilde{S}_{n,a}^*$ be the sequence positive linear operators and $u_n$ is unbounded statistically convergent sequence. \\

Since $\tilde{S}_{n,a}^*$ is statistically convergent and  also $u_n$ is statistically convergent but not convergent so one can observe that the  sequence $T_n$ satisfies the Theorem \ref{T3}, but not the classical Korovkin theorem.
\end{example}

\begin{definition}
Let $\xi_n$ be a sequence that is converges statistically  to $\xi$, having degree $\beta\in (0,1)$, if for each $\epsilon>0$, we have
\begin{eqnarray*}
\underset{n}\lim\frac{\{n\leq k:|\xi_n-\xi|\geq \epsilon\}}{k^{1-\beta}}=0
\end{eqnarray*} 
In this case, we can write
\begin{eqnarray*}
\xi_n-\xi=st-o(k^{-\beta}),~~~~~~k\to\infty.
\end{eqnarray*}
\end{definition} 

\begin{theorem}
Let $\{\tilde{S}_{n,a}^*\}$ be a sequence defined by (\ref{NO}) that satisfy the conditions 
\begin{eqnarray}
 st-\underset{n\to\infty}\lim\|\tilde{S}_{n,a}^*(e_0;x)-e_0\|&=&st-o(n^{-\zeta_1}),\\
 st-\underset{n\to\infty}\lim\|\tilde{S}_{n,a}^*(e_1;x)-e_1\|&=&st-o(n^{-\zeta_2}),\\
 st-\underset{n\to\infty}\lim\|\tilde{S}_{n,a}^*(e_2;x)-e_2\|&=&st-o(n^{-\zeta_3}),
\end{eqnarray} 
as $n\to\infty$. Then for each $f\in C_B[0,l]$, we have
\begin{eqnarray*}
st-\underset{n\to\infty}\lim\ \tilde{S}_{n,a}^*(f;x)-f(x)\|&=& st-o(n^{-\zeta}),\,~~~~~~~\text{as}~n\to\infty,
\end{eqnarray*}
where  $\zeta=\min\{\zeta_1,~\zeta_2,~\zeta_3 \}$.
\end{theorem}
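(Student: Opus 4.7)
My plan is to push the three given statistical rates through a quantitative Korovkin--Shisha--Mond type estimate. For any $f \in C_B[0,l]$, $x \in [0,l]$, and $\delta > 0$, the pointwise bound $|f(u)-f(x)| \le \omega(f,\delta)(1+(u-x)^2/\delta^2)$ combined with the positivity and linearity of $\tilde{S}_{n,a}^*$ yields the master inequality
\begin{equation*}
|\tilde{S}_{n,a}^*(f;x)-f(x)| \le \|f\|\,|\tilde{S}_{n,a}^*(e_0;x)-1| + \bigl(\tilde{S}_{n,a}^*(e_0;x) + \delta^{-2}\tilde{S}_{n,a}^*(\xi_x^2;x)\bigr)\,\omega(f,\delta).
\end{equation*}
Taking the supremum over $x \in [0,l]$ reduces the problem to controlling $\|\tilde{S}_{n,a}^*(e_0)-e_0\|$, $\|\tilde{S}_{n,a}^*(\xi_x^2;\cdot)\|$, and $\omega(f,\delta)$ at an appropriate $\delta = \delta_n$.

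The second central moment is handled by linearity: from $\tilde{S}_{n,a}^*(\xi_x^2;x) = \tilde{S}_{n,a}^*(e_2;x) - 2x\,\tilde{S}_{n,a}^*(e_1;x) + x^2\,\tilde{S}_{n,a}^*(e_0;x)$, the triangle inequality on $[0,l]$ gives
\begin{equation*}
\|\tilde{S}_{n,a}^*(\xi_x^2;\cdot)\| \le \|\tilde{S}_{n,a}^*(e_2)-e_2\| + 2l\,\|\tilde{S}_{n,a}^*(e_1)-e_1\| + l^2\,\|\tilde{S}_{n,a}^*(e_0)-e_0\|.
\end{equation*}
Each summand on the right is $st-o(n^{-\zeta_i})$ by hypothesis, so the whole sum is $st-o(n^{-\zeta})$ with $\zeta = \min\{\zeta_1,\zeta_2,\zeta_3\}$. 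Choosing $\delta_n^2$ equal to this upper bound keeps the bracketed factor in the master inequality statistically bounded, and the master estimate collapses to $\|\tilde{S}_{n,a}^*(f;\cdot)-f\| \le \|f\|\,\|\tilde{S}_{n,a}^*(e_0)-e_0\| + C\,\omega(f,\delta_n)$ for some constant $C$.

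The main technical step, which I expect to be the crux, is verifying that both terms on the right inherit the rate $st-o(n^{-\zeta})$. The first summand is immediate from hypothesis (i). For $\omega(f,\delta_n)$ I would mimic the set-decomposition argument of Theorem \ref{T3}: given $\epsilon > 0$, I would show that
\begin{equation*}
\{n \le k : \|\tilde{S}_{n,a}^*(f;\cdot)-f\| \ge \epsilon\, n^{-\zeta}\} \subseteq O_0 \cup O_1 \cup O_2,
\end{equation*}
where $O_i$ collects the indices on which the $i$-th hypothesis fails at scale $n^{-\zeta}$. Each $O_i$ has natural density vanishing at rate $k^{-(1-\zeta_i)}$, so a density union-bound yields $\|\tilde{S}_{n,a}^*(f;\cdot)-f\| = st-o(n^{-\zeta})$. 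The principal obstacle is the qualitative-to-quantitative passage inside $\omega(f,\cdot)$: since $f$ is only bounded and continuous, $\omega(f,\delta)\to 0$ with no a priori algebraic rate, and one must rely on the preservation of statistical convergence under the continuous map $\omega(f,\cdot)$, together with explicit bookkeeping of the ``bad'' index sets above, to pin down the precise $n^{-\zeta}$ decay claimed in the theorem.
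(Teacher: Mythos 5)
There is a genuine gap, and it originates in how you read the symbol $st$-$o(n^{-\zeta})$. In the paper's definition (the one immediately preceding this theorem), a sequence is $st$-$o(k^{-\beta})$ when, for each \emph{fixed} $\epsilon>0$, the cardinality $|\{n\le k: |\xi_n-\xi|\ge\epsilon\}|$ is $o(k^{1-\beta})$; the exponent $\beta$ measures how fast the \emph{density of the exceptional set} vanishes, not how fast $|\xi_n-\xi|$ itself decays. Your proposal treats it as the latter: you form the set $\{n\le k: \|\tilde{S}_{n,a}^*(f;\cdot)-f\|\ge \epsilon\, n^{-\zeta}\}$ with a shrinking threshold, and accordingly you try to force $\omega(f,\delta_n)=st$-$o(n^{-\zeta})$ with $\delta_n\to 0$. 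That last step cannot be repaired: for a general $f\in C_B[0,l]$ the modulus of continuity admits no algebraic rate in $\delta$, and no continuity-preservation argument for statistical limits will manufacture one. Under your reading the theorem would in fact be false for generic continuous $f$, which is a signal that the reading is wrong rather than that the theorem needs a deeper tool.

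With the correct reading the proof is much closer to the classical Korovkin argument, and your master inequality is still the right vehicle — you just must keep $\delta$ \emph{fixed}. Given $\epsilon>0$, choose $\delta=\delta(\epsilon,f)$ so that $\omega(f,\delta)$ is small enough that $\|\tilde{S}_{n,a}^*(f;\cdot)-f\|\ge\epsilon$ forces at least one of $\|\tilde{S}_{n,a}^*(e_i;\cdot)-e_i\|\ge\epsilon_i$ for fixed thresholds $\epsilon_0,\epsilon_1,\epsilon_2>0$ (this is exactly the Shisha--Mond estimate you wrote, together with your expansion of the second central moment in terms of $e_0,e_1,e_2$). Then
$|\{n\le p: \|\tilde{S}_{n,a}^*(f;\cdot)-f\|\ge\epsilon\}|\,p^{-(1-\zeta)}
\le \sum_{i} |\{n\le p: \|\tilde{S}_{n,a}^*(e_i;\cdot)-e_i\|\ge\epsilon_i\}|\,p^{-(1-\zeta_i)}\cdot p^{\zeta-\zeta_i}$,
and since $\zeta=\min\{\zeta_1,\zeta_2,\zeta_3\}$ each factor $p^{\zeta-\zeta_i}\le 1$, so every term tends to $0$ by hypothesis. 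This division-by-$p^{1-\zeta}$ bookkeeping with the exponent comparison is precisely the mechanism in the paper's proof (which, it must be said, only displays the computation for $e_2$ and asserts the passage to general $f$; your set-decomposition outline, once the thresholds are fixed rather than scaled by $n^{-\zeta}$, actually supplies the missing Korovkin step more completely than the paper does).
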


\begin{proof}
One can write the inequality (\ref{IN1}) of Theorem \ref{T3} as:

\begin{eqnarray*}
&&\frac{| \{n\leq p: \|\tilde{S}_{n,a}^*(e_2;x)-e_2\|\geq\epsilon \}|}{p^{1-\zeta}} \leq \frac{\left| \left\{n\leq p: \frac{1}{3n^2}\geq \frac{\epsilon}{3m^2} \right\}\right|}{p^{1-\zeta_1}} \frac{p^{1-\zeta_1}}{p^{1-\zeta}}\\&&\hspace{5cm}+\frac{\left| \left\{n\leq p: \frac{2\log{a}}{\left(-1+a^{\frac{1}{n}}\right)n^2}\geq \frac{\epsilon}{3m^2} \right\}\right|}{p^{1-\zeta_2}}\frac{p^{1-\zeta_2}}{p^{1-\zeta}}\nonumber\\&&\hspace{5cm} +\frac{\left| \left\{n\leq p: \left( \left(\frac{\log{a}}{\left(-1+a^{\frac{1}{n}}\right)n} \right)^2-1 \right) \geq\frac{e}{3m^2} \right\}\right|}{p^{1-\zeta_3}}\frac{p^{1-\zeta_3}}{p^{1-\zeta}}.
\end{eqnarray*}
By letting $\zeta=\min\{\zeta_1,~\zeta_2,~\zeta_3 \}$ and as $n\to\infty$ the desired result can be achieved. 
\end{proof}



\section{Weighted Statistical Convergence}
Next, we introduce the convergence properties of the given operators (\ref{NO}) using Korovkin type theorem , recall from \cite{GA1,GA2}, here the weight function is $w(x)=1+\gamma^2(x)$, where $\gamma:\mathbb{R_+}\rightarrow\mathbb{R_+}$ is an unbounded strictly increasing continuous function for which, there exist $M>0$ and $\alpha\in(0,1]$ such that: 

\begin{eqnarray*}
|x-y|\leq M| \gamma(x)-\gamma(y)|, \;\;\ \forall\; x,y\geq 0.
\end{eqnarray*}

Consider $w(x)=1+x^2$ be a weighted function and let $B_w[0,\infty)$ be the space defined by:
\begin{center}
$B_w[0,\infty)=\left\{f:[0,\infty)\to \mathbb{R}~~\vert~~ \|f\|_{w}=\sup\limits_
{x\ge 0}\frac{f(x)}{w(x)}< +\infty \right\}.$
\end{center}
Also the spaces 
\begin{eqnarray*}
C_w[0,\infty)&=& \{f\in B_w[0,\infty), ~~f~~ \text{is continuous}\},\\ C_w^k[0,\infty)&=& \left\{f\in C_w[0,\infty),~ \lim\limits_{x\rightarrow\infty}\frac{f(x)}{w(x)}=k_{f}<+\infty \right\}.
\end{eqnarray*}

 We can move towards the main theorem by considering the functions from the above defined spaces via statistically.
 
\begin{theorem}
Let $\tilde{S}_{n,a}^*f$ be a linear positive operators defined by (\ref{NO}), then for each $f\in C_w^k[0,\infty)$, we have 
\begin{center}
$st-\lim\limits_{n\rightarrow\infty}\|\tilde{S}_{n,a}^*(f;x)-f(x)\|_w=0.$
\end{center}
\end{theorem}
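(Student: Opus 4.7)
The strategy is to invoke the weighted-statistical Korovkin theorem (the statistical analogue of Gadzhiev's weighted Korovkin theorem), which reduces the claim to verifying the three moment conditions
$$st-\lim_{n\to\infty}\|\tilde{S}_{n,a}^*(e_i;\cdot)-e_i\|_w = 0, \qquad i=0,1,2,$$
where the norm is the weighted sup-norm attached to $w(x)=1+x^2$. Once these three conditions are established, the theorem follows by a direct appeal to that general result on the space $C_w^k[0,\infty)$.

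The verification is analogous to the set-decomposition argument carried out in the proof of Theorem \ref{T3}, only with the weighted norm replacing the uniform norm. For $i=0$, Lemma \ref{L1}(1) gives $\tilde{S}_{n,a}^*(e_0;x)=1=e_0$ identically, so the weighted norm is zero for every $n$. For $i=1$, Lemma \ref{L1}(2) yields
$$\tilde{S}_{n,a}^*(e_1;x)-x = \frac{1}{2n} + x\left(\frac{\log a}{(a^{1/n}-1)n}-1\right),$$
and dividing by $w(x)$ and applying $\sup_{x\geq 0}\frac{1}{1+x^2}=1$ together with $\sup_{x\geq 0}\frac{x}{1+x^2}=\tfrac{1}{2}$ produces the $x$-free bound
$$\|\tilde{S}_{n,a}^*(e_1;\cdot)-e_1\|_w \leq \frac{1}{2n}+\frac{1}{2}\left|\frac{\log a}{(a^{1/n}-1)n}-1\right|.$$
For $i=2$, Lemma \ref{L1}(3) combined with $\sup_{x\geq 0}\frac{x^2}{1+x^2}=1$ delivers an analogous three-term $x$-free bound involving $\frac{1}{3n^2}$, $\frac{2\log a}{(a^{1/n}-1)n^2}$, and $\left|\frac{(\log a)^2}{(a^{1/n}-1)^2 n^2}-1\right|$.

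Because $a^{1/n}-1 \sim (\log a)/n$ as $n\to\infty$, each elementary sequence appearing in these bounds converges to $0$ classically and therefore statistically as well. I would then reproduce the set-splitting step that led to (\ref{IN1}) in Theorem \ref{T3}: for any $\epsilon>0$, dominate the natural density of $\{n\leq k:\|\tilde{S}_{n,a}^*(e_i;\cdot)-e_i\|_w\geq\epsilon\}$ by a sum of at most three densities, one for each summand, and conclude that the statistical limit of the weighted norm is zero. The chief subtlety here is structural rather than computational: the weighted Korovkin machinery operates on $C_w^k[0,\infty)$ rather than the larger $C_w[0,\infty)$, since the existence of $\lim_{x\to\infty}f(x)/w(x)$ is what forces the estimates to be uniform near infinity, and this is exactly why the theorem's hypothesis restricts $f$ to $C_w^k[0,\infty)$.
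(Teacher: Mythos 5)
Your proposal is correct and follows essentially the same route as the paper: verify the three weighted-norm moment conditions $st\text{-}\lim_n\|\tilde{S}_{n,a}^*(e_i;\cdot)-e_i\|_w=0$ for $i=0,1,2$ via the same set-decomposition density argument, then conclude by the weighted statistical Korovkin theorem. If anything, your explicit appeal to that general theorem (and your remark on why $C_w^k$ rather than $C_w$ is needed) is cleaner than the paper's closing step, which writes the conclusion as a direct inequality $\|\tilde{S}_{n,a}^*(f;\cdot)-f\|_w\leq\sum_i\|\tilde{S}_{n,a}^*(e_i;\cdot)-e_i\|_w$ that is not literally valid without the Korovkin machinery behind it.
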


\begin{proof}
Using the Lemma \ref{L1}, we have $ \tilde{S}_{n,a}^*(e_0;x)=1$, then it is obvious that
\begin{eqnarray*}
\| \tilde{S}_{n,a}^*(e_0;x)-1\|_w=0.
\end{eqnarray*}
Now we have

\begin{eqnarray*}
\| \tilde{S}_{n,a}^*(e_1;x)-e_1)\|_w &=& \sup\limits_{x\geq 0} \left(\frac{1}{2n}+\left\{ \frac{1}{\left(-1+a^{\frac{1}{n}}\right)n}\log{a}-1\right\} x\right)\frac{1}{1+x^2}\\ &=& \sup\limits_{x\geq 0} \left(\frac{1}{2n}\frac{1}{1+x^2}+\frac{x\log{a}}{\left(-1+a^{\frac{1}{n}}\right)n}\frac{1}{1+x^2}-\frac{x}{1+x^2}\right) \\ &=& \sup\limits_{x\geq 0} \left(\frac{1}{2n}\frac{1}{1+x^2}+ \left( \frac{\log{a}}{\left(-1+a^{\frac{1}{n}}\right)n}-1\right)\frac{x}{1+x^2}\right) \\ &\leq & \left(\frac{1}{2n}+ \left( \frac{\log{a}}{\left(-1+a^{\frac{1}{n}}\right)n}-1\right)\right) 
\end{eqnarray*}

Now for any $\epsilon>0$, on defining the following sets:
\begin{eqnarray*}
P &=& \left\{n:\| \tilde{S}_{n,a}^*(e_1;x)-e_1)\|_w \geq \epsilon \right\}\\
P' &=& \left\{n: \frac{1}{2n}\geq\frac{\epsilon}{2} \right\}\\
P'' &=& \left\{n:  \left( \frac{\log{a}}{\left(-1+a^{\frac{1}{n}}\right)n}-1\right)\geq \frac{\epsilon}{2}\right\}, 
\end{eqnarray*}
where $P\subseteq P' \cup P''$, it follows
\begin{eqnarray}\label{EQ1}
&&d  \left\{n\leq m:\| \tilde{S}_{n,a}^*(e_1;x)-e_1)\|_w \geq \epsilon \right\}  \leq d \left\{n\leq m: \frac{1}{2n}\geq\frac{\epsilon}{2} \right\}\nonumber \\  &&\hspace{6cm}+d \left\{n\leq m:  \left( \frac{\log{a}}{\left(-1+a^{\frac{1}{n}}\right)n}-1\right)\geq \frac{\epsilon}{2}\right\}. 
\end{eqnarray}

Right hand side of the above inequality (\ref{EQ1}) is statistically convergent, hence
\begin{eqnarray}
st-\underset{n}\lim \| \tilde{S}_{n,a}^*(e_1;x)-e_1)\|_w=0
\end{eqnarray}

Similarly,
\begin{eqnarray}\label{EQ2}
\| \tilde{S}_{n,a}^*(e_2;x)-e_2\|_w \nonumber &=& \sup\limits_{x\geq 0}\left\{\frac{1}{3n^2}+\frac{2x\log{a}}{\left\{-1+a^{\frac{1}{n}}\right\}n^2}+\frac{x^2(\log{a})^2}{\left\{-1+a^{\frac{1}{n}}\right\}^2n^2}-x^2\right\}\frac{1}{1+x^2}\\ \nonumber 
& \leq & \left\{\frac{1}{3n^2}+\frac{2\log{a}}{\left(-1+a^{\frac{1}{n}}\right)n^2}+\left(\frac{(\log{a})^2}{\left(-1+a^{\frac{1}{n}}\right)^2n^2}-1\right)\right\}.
\end{eqnarray}
Similarly for any $\epsilon>0$, by defining following define sets
\begin{eqnarray*}
H&=&\left\{n: \| \tilde{S}_{n,a}^*(e_2;x)-e_2\|_w\geq \epsilon \right\}\\
H'&=&\left\{n: \frac{1}{3n^2}\geq \frac{\epsilon}{3} \right\}\\
H''&=&\left\{n: \frac{2\log{a}}{\left(-1+a^{\frac{1}{n}}\right)n^2}\geq\frac{\epsilon}{3} \right\}\\
H'''&=&\left\{n: \left(\frac{(\log{a})^2}{\left(-1+a^{\frac{1}{n}}\right)^2n^2}-1\right)\geq\frac{\epsilon}{3} \right\},
\end{eqnarray*}
where $H\subseteq H'\cup H''\cup H'''$, it follows
\begin{eqnarray}
d\{n\leq m: \| \tilde{S}_{n,a}^*(e_2;x)-e_2\|_w\geq \epsilon \}&=&0\label{E3}\\
d\left\{n\leq m: \frac{2\log{a}}{\left(-1+a^{\frac{1}{n}}\right)n^2}\geq\frac{\epsilon}{3} \right\}&=&0\label{E4}\\
d\left\{n\leq m: \left(\frac{(\log{a})^2}{\left(-1+a^{\frac{1}{n}}\right)^2n^2}-1\right)\geq\frac{\epsilon}{3} \right\}&=&0.\label{E5}
\end{eqnarray}
By the above relations (\ref{E3},\ref{E4},\ref{E5}), we yield as: 
\begin{eqnarray}
st-\lim\limits_{n\rightarrow\infty}\|\tilde{S}_{n,a}^*(e_2;x)-e_2\|_w=0.
\end{eqnarray}
Hence,
\begin{eqnarray}
|\tilde{S}_{n,a}^*(f;x)-f(x)\|_w\leq  \| \tilde{S}_{n,a}^*(e_0;x)-e_0)\|_w +  \| \tilde{S}_{n,a}^*(e_1;x)-e_1)\|_w +  \| \tilde{S}_{n,a}^*(e_2;x)-e_2)\|_w,
\end{eqnarray}
we get,

\begin{eqnarray}
st-\lim\limits_{n\rightarrow\infty} \|\tilde{S}_{n,a}^*(f;x)-f(x)\|_w &\leq &  st-\lim\limits_{n\rightarrow\infty}  \| \tilde{S}_{n,a}^*(e_0;x)-e_0)\|_w  + st-\lim\limits_{n\rightarrow\infty}  \| \tilde{S}_{n,a}^*(e_1;x)-e_1)\|_w \\ \nonumber \\ \hspace{6cm} &+& st-\lim\limits_{n\rightarrow\infty} \| \tilde{S}_{n,a}^*(e_2;x)-e_2)\|_w \nonumber,
\end{eqnarray}
which implies that 
\begin{center}
$st-\lim\limits_{n\rightarrow\infty}\|\tilde{S}_{n,a}^*(f;x)-f(x)\|_w=0.$
\end{center}
Hence proved.
\end{proof}
\section{Rate of Statistical Convergence}
In this section, we shall introduce the order of approximation of the operators by means of the modulus of continuity and function belonging to the Lipschitz class.

Let $f\in C_B[0,\infty)$, the space of all continuous and bounded functions defined on the interval $[0,\infty)$ and for any  $x\geq 0$,  the modulus of continuity of $f$ is defined to be 
\begin{eqnarray*}
\omega(f;\delta)=\underset{|u-x|\leq\delta}\sup |f(u)-f(x)|,~~~~~~~~~~~~~~~u\in[0,\infty).
\end{eqnarray*}  
And for any $\delta>0$ and each $x,~u\in [0,\infty)$, we have
\begin{eqnarray}\label{E6}
|f(u)-f(x)|\leq \omega(f;\delta)\left(\frac{|u-x|}{\delta}+1 \right)
\end{eqnarray}
 
Next theorem deals with error estimation using the modulus of continuity:

\begin{theorem}
Let $f\in C_B[0,\infty)$ be a non-decreasing function then we have
\begin{eqnarray*}
|\tilde{S}_{n,a}^*(f;x)-f(x)|\leq 2\omega\left(f;\sqrt{\delta_{n,a}} \right),~~~~~~~~x\geq 0,
\end{eqnarray*}
where
\begin{eqnarray*}
\delta_{n,a}=\Bigg(\frac{(1-3nx+3n^2x^2)}{3n^2}-\frac{2(-1+a^{\frac{1}{n}})(-1+nx)x\log{a}}{\left(-1+a^{\frac{1}{n}}\right)^2n^2}+\frac{x^2(\log{a})^2}{\left(-1+a^{\frac{1}{n}}\right)^2n^2} \Bigg).
\end{eqnarray*}
\end{theorem}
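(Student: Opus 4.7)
The plan is to run the classical Korovkin--Shisha--Mond style argument, exploiting that $\delta_{n,a}$ is precisely the second central moment $\tilde{S}_{n,a}^{*}(\xi_x^{2}(u);x)$ furnished by part (2) of Lemma \ref{L2}.

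First I would use that $\tilde{S}_{n,a}^{*}(e_{0};x)=1$ from Lemma \ref{L1} together with linearity to write
\begin{eqnarray*}
|\tilde{S}_{n,a}^{*}(f;x)-f(x)| \;=\; \left| \tilde{S}_{n,a}^{*}\bigl(f(u)-f(x);x\bigr)\right|\;\leq\; \tilde{S}_{n,a}^{*}\bigl(|f(u)-f(x)|;x\bigr),
\end{eqnarray*}
where the last inequality follows because $\tilde{S}_{n,a}^{*}$ is positive and linear. Next I would invoke the standard modulus of continuity estimate (\ref{E6}), namely $|f(u)-f(x)|\leq \omega(f;\delta)\bigl(1+|u-x|/\delta\bigr)$, valid for any $\delta>0$, and push it through the operator to obtain
\begin{eqnarray*}
|\tilde{S}_{n,a}^{*}(f;x)-f(x)| \;\leq\; \omega(f;\delta)\left(1+\frac{1}{\delta}\,\tilde{S}_{n,a}^{*}(|u-x|;x)\right).
\end{eqnarray*}

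The key step is to control $\tilde{S}_{n,a}^{*}(|u-x|;x)$ by the second central moment. For this I would apply the Cauchy--Schwarz inequality for positive linear operators,
\begin{eqnarray*}
\tilde{S}_{n,a}^{*}(|u-x|;x)\;\leq\; \sqrt{\tilde{S}_{n,a}^{*}\bigl((u-x)^{2};x\bigr)}\cdot\sqrt{\tilde{S}_{n,a}^{*}(e_{0};x)}\;=\;\sqrt{\delta_{n,a}},
\end{eqnarray*}
where the identification of the radicand with $\delta_{n,a}$ is read off directly from Lemma \ref{L2}(2). Substituting this bound and then choosing the free parameter $\delta:=\sqrt{\delta_{n,a}}$ gives
\begin{eqnarray*}
|\tilde{S}_{n,a}^{*}(f;x)-f(x)| \;\leq\; \omega\!\left(f;\sqrt{\delta_{n,a}}\right)\bigl(1+1\bigr)\;=\;2\,\omega\!\left(f;\sqrt{\delta_{n,a}}\right),
\end{eqnarray*}
which is the desired inequality.

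There is no real obstacle here; the only subtle point is confirming that the operator-version of Cauchy--Schwarz applies, which it does because $\tilde{S}_{n,a}^{*}$ is a probability-type operator (it preserves constants by Lemma \ref{L1}(1) and is positive on $C[0,\infty)$). The hypothesis that $f$ is non-decreasing does not actually enter the estimate, so it can be regarded as superfluous for this bound; one only needs $f\in C_{B}[0,\infty)$ for $\omega(f;\cdot)$ to be finite.
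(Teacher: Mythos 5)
Your proposal is correct and follows essentially the same route as the paper: apply the modulus-of-continuity inequality, bound the first absolute central moment via Cauchy--Schwarz by $\sqrt{\tilde{S}_{n,a}^{*}(\xi_x^{2}(u);x)}$, and choose $\delta=\sqrt{\delta_{n,a}}$ (the paper writes ``$\delta=\delta_{n,a}$'' at that step, but the square root is what its final bound requires, so your version is the cleaner one). Your observation that the non-decreasing hypothesis is never used is also accurate.
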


\begin{proof}
With the linearity and positivity properties of the defined operators (\ref{NO}), it can be expressed as
\begin{eqnarray}
|\tilde{S}_{n,a}^*(f;x)-f(x)| &\leq & \tilde{S}_{n,a}^*(|f(u)-f(x)|;x)\nonumber\\
&=& n\sum\limits_{k=0}^{\infty}s_n^a(x)\int\limits_{\frac{k}{n}}^{\frac{k+1}{n}}|f(u)-f(x)|\,du
\end{eqnarray}
By using the  inequality (\ref{E6}),  the above inequality can be written as:

\begin{eqnarray*}
|\tilde{S}_{n,a}^*(f;x)-f(x)| &\leq & n\sum\limits_{k=0}^{\infty}s_n^a(x)\int\limits_{\frac{k}{n}}^{\frac{k+1}{n}} \omega(f;\delta)\left(\frac{|u-x|}{\delta}+1 \right)~du\\
&=& \omega(f;\delta) \Big\{1+\frac{n}{\delta} \sum\limits_{k=0}^{\infty}s_n^a(x)\int\limits_{\frac{k}{n}}^{\frac{k+1}{n}} |u-x|~du \Big\}\\ 
& \leq & \omega(f;\delta) \Big\{1+\frac{n}{\delta} \Big( \Big( \sum\limits_{k=0}^{\infty} s_n^a(x)\int\limits_{\frac{k}{n}}^{\frac{k+1}{n}} (u-x)^2~du \Big)^{\frac{1}{2}}\\
&=& \omega(f;\delta) \Big\{1+\frac{1}{\delta} \sqrt{\tilde{S}_{n,a}^*(\xi_x^2(u);x)} \Big\} \\
\end{eqnarray*}

By choosing, $\delta=\delta_{n,a}$, where 
\begin{eqnarray}\label{E7}
\delta_{n,a}=\Bigg(\frac{(1-3nx+3n^2x^2)}{3n^2}-\frac{2(-1+a^{\frac{1}{n}})(-1+nx)x\log{a}}{\left(-1+a^{\frac{1}{n}}\right)^2n^2}+\frac{x^2(\log{a})^2}{\left(-1+a^{\frac{1}{n}}\right)^2n^2} \Bigg).
\end{eqnarray}
Hence, the required result can be obtained.
\end{proof}

\begin{remark}
By using the above equation (\ref{E7}), one can find that
\begin{eqnarray}
st-\underset{n}\lim~\delta_{n,a}=0,
\end{eqnarray}
By (\ref{E6}), similarly one can get 
\begin{eqnarray}
st-\underset{n}\lim~\omega(f;\delta_{n,a})=0
\end{eqnarray}
and hence  the pointwise rate of convergence of the operators $\tilde{S}_{n,a}^*(f;x)$ can be obtained.
\end{remark}

\begin{theorem}\cite{RY1}\em{
Let $f\in C_B[0,\infty)$ and if $f\in\text{Lip}_\mathcal{M}(\alpha),~~~\alpha\in(0,1]$ holds that is the inequality 
\begin{eqnarray*}
|f(u)-f(x)|\leq \mathcal{M}|u-x|^{\alpha},~~~~~u,x\in[0,\infty),~\text{where}~~~\mathcal{M} \text{~is a positive constant}
\end{eqnarray*}
then for every $x\geq 0$, we have 
\begin{eqnarray*}
|\tilde{S}_{n,a}^*(f;x)-f(x)|\leq \mathcal{M}\delta_{n,a}^{\frac{\alpha}{2}},
\end{eqnarray*}
where $\delta_{n,a}=\tilde{S}_{n,a}^*((u-x)^{2};x)$.
}
\end{theorem}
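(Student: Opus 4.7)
The plan is to follow the classical template for rate-of-convergence estimates under a Lipschitz hypothesis, exploiting positivity, linearity, and Hölder's inequality against the second moment $\delta_{n,a}$ already computed in Lemma \ref{L2}.

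First I would use positivity and linearity of $\tilde{S}_{n,a}^*$ (available from the definition \eqref{NO} since $s_n^a(x)\geq 0$) to pass the absolute value inside:
\begin{eqnarray*}
|\tilde{S}_{n,a}^*(f;x)-f(x)| \leq \tilde{S}_{n,a}^*(|f(u)-f(x)|;x),
\end{eqnarray*}
where I have used $\tilde{S}_{n,a}^*(e_0;x)=1$ from Lemma \ref{L1} to replace $f(x)$ by $f(x)\tilde{S}_{n,a}^*(e_0;x)$ before combining. Then the Lipschitz assumption $|f(u)-f(x)|\leq \mathcal{M}|u-x|^\alpha$ reduces the task to controlling $\tilde{S}_{n,a}^*(|u-x|^\alpha;x)$.

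Next, since $\alpha\in(0,1]$, I would apply Hölder's inequality with conjugate exponents $p=2/\alpha$ and $q=2/(2-\alpha)$ to the integral-sum representation of $\tilde{S}_{n,a}^*$, obtaining
\begin{eqnarray*}
\tilde{S}_{n,a}^*(|u-x|^{\alpha};x) \leq \bigl(\tilde{S}_{n,a}^*((u-x)^2;x)\bigr)^{\alpha/2}\bigl(\tilde{S}_{n,a}^*(e_0;x)\bigr)^{(2-\alpha)/2}.
\end{eqnarray*}
Using $\tilde{S}_{n,a}^*(e_0;x)=1$ once more, the second factor disappears, leaving $\delta_{n,a}^{\alpha/2}$ with $\delta_{n,a}=\tilde{S}_{n,a}^*(\xi_x^2(u);x)$ as furnished explicitly by part (2) of Lemma \ref{L2}. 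Multiplying by $\mathcal{M}$ gives the claimed bound.

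The main step to execute carefully is the Hölder inequality, since $\tilde{S}_{n,a}^*$ is a mixed sum-and-integral operator: one must verify Hölder is legitimate on the positive linear functional $g\mapsto n\sum_{k\geq 0}s_n^a(x)\int_{k/n}^{(k+1)/n} g(u)\,du$, which is immediate because it is an integral against a probability measure (thanks to $\tilde{S}_{n,a}^*(e_0;x)=1$). Apart from this brief justification, the argument is a direct three-line estimate and presents no serious obstacle.
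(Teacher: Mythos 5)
Your proposal is correct and follows essentially the same route as the paper: bound $|\tilde{S}_{n,a}^*(f;x)-f(x)|$ by $\tilde{S}_{n,a}^*(|f(u)-f(x)|;x)$, invoke the Lipschitz condition, and apply H\"older's inequality with $p=2/\alpha$, $q=2/(2-\alpha)$ against the second central moment, using $\tilde{S}_{n,a}^*(e_0;x)=1$. The only (cosmetic) difference is that you apply H\"older once to the whole operator viewed as integration against a probability measure, whereas the paper applies it to the inner integral and the sum separately; your single-step version is cleaner and equally valid.
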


\begin{proof}
Since we have $f\in C_B[0,\infty)\cap\text{Lip}_\mathcal{M}(\alpha)$,  so
\begin{eqnarray*}
|\tilde{S}_{n,a}^*(f;x)-f(x)| & \leq & \tilde{S}_{n,a}^*(|f(u)-f(x)|;x)\\&\leq & M \tilde{S}_{n,a}^*(|u-x|^{\alpha};x) =  \mathcal{M} \Big(n\sum\limits_{k=0}^{\infty}s_n^a(x)\int\limits_{\frac{k}{n}}^{\frac{k+1}{n}}|u-x|^{\alpha}~du\Big). 
\end{eqnarray*}
Now by applying H$\ddot{o}$lder inequality with $\mathsf{p}=\frac{2}{\alpha}$ and $\mathsf{q}=\frac{2}{2-\alpha}$, we have 
\begin{eqnarray*}
|\tilde{S}_{n,a}^*(f;x)-f(x)| & \leq & \mathcal{M}\left(n\sum\limits_{k=0}^{\infty}s_n^a(x)\Big\{\int\limits_{\frac{k}{n}}^{\frac{k+1}{n}}(u-x)^2~du,\Big\}^{\frac{\alpha}{2}}\right) \leq \mathcal{M}(\tilde{S}_{n,a}^*(\xi_x^2(u);x))^\frac{\alpha}{2}\\ &=& \mathcal{M}\delta_{n,a}^{\frac{\alpha}{2}}.
\end{eqnarray*}
Hence proved.
\end{proof}

\begin{remark}
Similarly by equation (\ref{E7}), we can justify 
\begin{eqnarray}
st-\underset{n}\lim~\delta_{n,a}=0,
\end{eqnarray}
and it can be seen that the rate of statistical convergence of the operators \ref{NO} to $f(x)$ are estimated by means of function belonging to the Lipschitz class.
\end{remark}
\pagebreak
\section{Graphical approach}
Based upon the defined operator  (\ref{NO}), now we will show the convergence of the operators, for different functions for particular the values of $n$. 

\begin{example}
Let the function $f(x)=e^{-2x}$ and choose the values of $n=5,10$ for which the corresponding operators are $\tilde{S}_{5,a}^*(f;x), \tilde{S}_{10,a}^*(f;x)$ respectively. The approach of the operators is faster to the function as in increment in the value of $n$ (when we choose $n=500, 1000$) and we can observe, the error is able to obtain as small as we please as the the value of $n$ is large, which can be seen in the given figures (\ref{F1}).

\begin{figure}[htb]
    \centering 
    \includegraphics[width=0.4\textwidth]{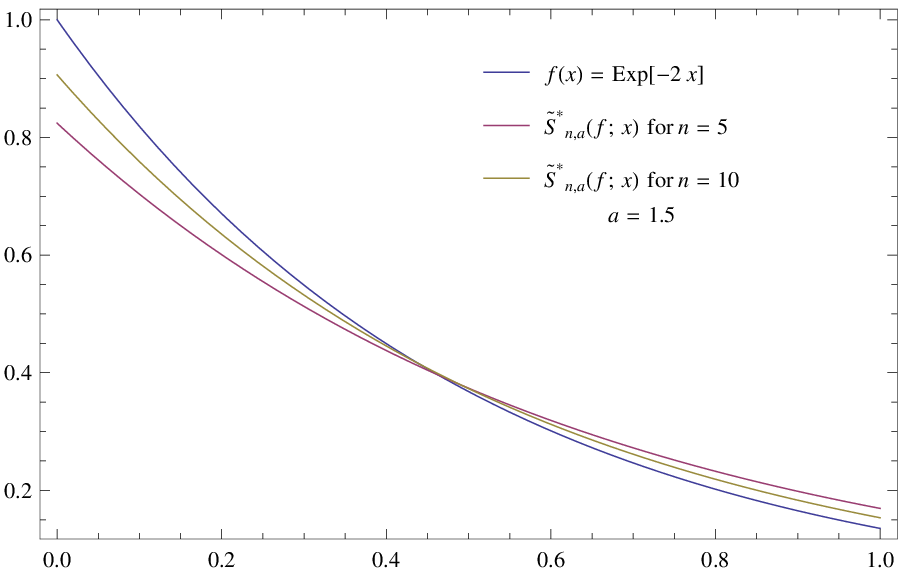}    \includegraphics[width=0.4\textwidth]{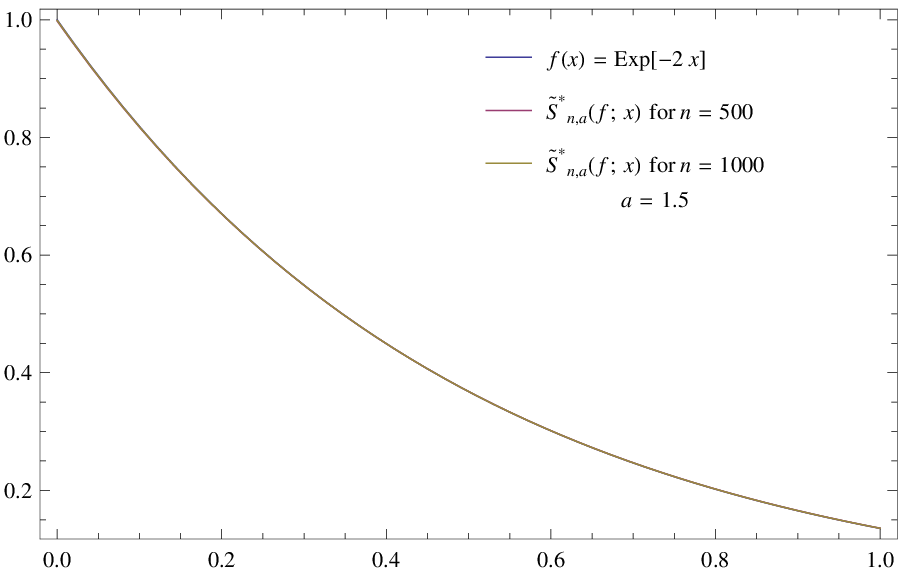} 
    \caption[Description in LOF, taken from~\cite{source}]{Convergence of the operators $\tilde{S}_{n,a}^*(exp(-2u);x)$ to $f(x)$}
    \label{F1}
\end{figure}
\end{example}
In fact, in Figure \ref{F1}, as the value of $n$ increases, the operator $\tilde{S}_{n,a}^*(f;x)$ approaches  towards the function $f(x)=exp(-2x)$ keeping $a=1.5$ fixed.
\begin{example}
Here, we consider the function $f(x)=x$ and choose the value of $n=100, 500$,the convergence can be seen by graphical representation, given by figure \ref{F2}. 
\begin{figure}[htb]
 \centering 
\includegraphics[width=0.4\textwidth]{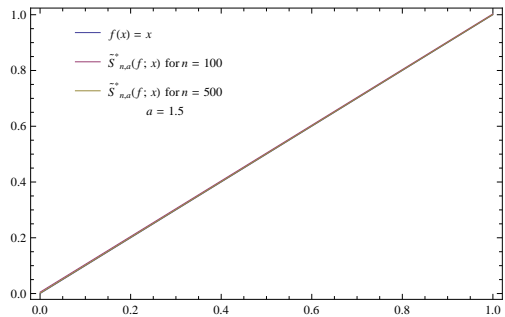} 
  \caption[Description in LOF, taken from~\cite{source}]{Convergence of the operators $\tilde{S}_{n,a}^*(u;x)$ to $f(x)$}
    \label{F2}
\end{figure}
\end{example}

\begin{example}
For the convergence of the proposed  operators (\ref{NO}) to the  function $f(x)=\left(x-\frac{1}{2} \right)\left(x-\frac{1}{3} \right)\left(x-\frac{1}{4} \right)$, choose $n=15, 30, 500, 1000$ and the errors can be observed by the given figures (\ref{F3}).
\begin{figure}[htb]
    \centering 
    \includegraphics[width=0.4\textwidth]{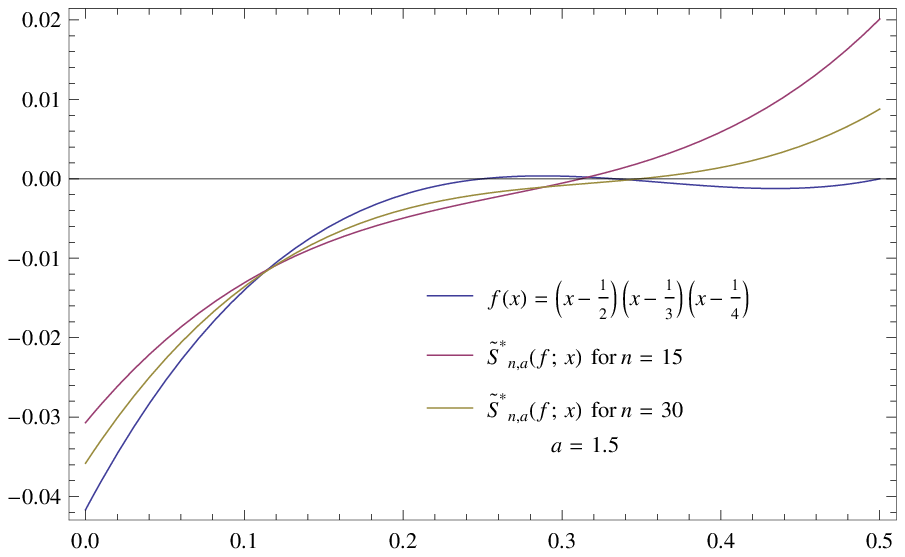}   \includegraphics[width=0.4\textwidth]{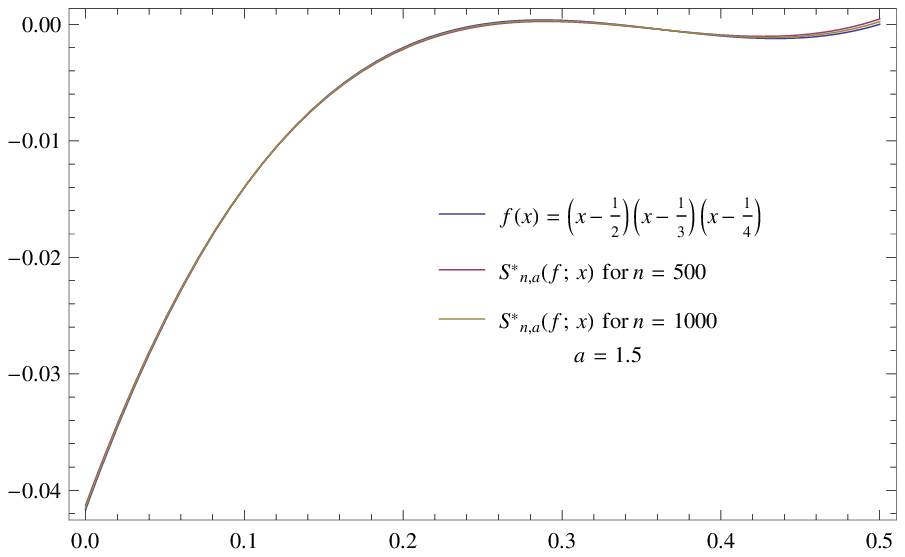} 
      \caption[Description in LOF, taken from~\cite{source}]{Convergence of the operators $\tilde{S}_{n,a}^*(f;x)$ to $f(x)$}
    \label{F3}
\end{figure}
\end{example}
\pagebreak
\textbf{Concluding Remark:}
Since, we have seen the approach of the proposed operators by all the above figures, we can observe that as the degree of the operators is large, the approximation will be better. Moreover, one can observe by Figure \ref{F2}, as the value of $n$ is increased, the operators $\tilde{S}_{n,a}^*(f;x)$  converge to the function $f(x)=x$ and in Figure \ref{F3}, it can be seen that the operator $\tilde{S}_{n,a}^*(t;x)$ converges to the function  $f(x)=\left(x-\frac{1}{2} \right)\left(x-\frac{1}{3} \right)\left(x-\frac{1}{4} \right)$ for large value of $n$.

 \subsection{A comparison to the Sz$\acute{\text{a}}$sz-Mirakjan-Kantorovich operators }

In 1983, V. Totik \cite{VT1} Introduced the Kantorovich variant of the Sz$\acute{\text{a}}$sz-Mirakjan operators in $L^{p}$-spaces for $p>1$, which are as follows:

\begin{eqnarray}\label{KO}
K_n(f;x)=ne^{-nx}\sum\limits_{k=0}^{\infty}\frac{(nx)^k}{k!}\int\limits_{\frac{k}{n}}^{\frac{k+1}{n}}f(u)~du.
\end{eqnarray}
Now we shall show a comparison with the above operators (\ref{KO}) to the proposed operators defined by (\ref{NO}) by graphical representation and convergence behavior will also be seen to the given function.

In Figure \ref{F4}, one can see that the said operators have a better rate of convergence as compared to the above operators (\ref{KO}), but both operators converges to the function $f(x)$.
\begin{example}
For the same degree of approximation of the operators $\tilde{S}_{n,a}^*(f;x)$ and $K_n(f;x)$ to the function $f(x)=x^3$, the comparison shown by Figure \ref{F4}.
\end{example}
\begin{figure}[htb]
    \centering 
    \includegraphics[width=0.4\textwidth]{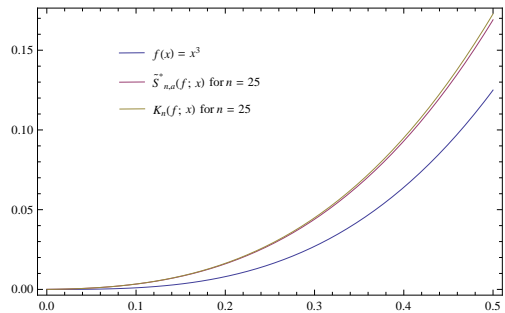}   
      \caption[Description in LOF, taken from~\cite{source}]{The comparison of the operators $\tilde{S}_{n,a}^*(f;x)$ and $K_n(f;x)$}
    \label{F4}
\end{figure}
\pagebreak
\begin{example}
Comparison of convergence can be seen for the operators $\tilde{S}_{n,a}^*(f;x)$ and $K_n(f;x)$ to the function in the given Figures \ref{F5} by choosing $n=4, 20$.
\begin{figure}[htb]
    \centering 
  \includegraphics[width=0.4\textwidth]{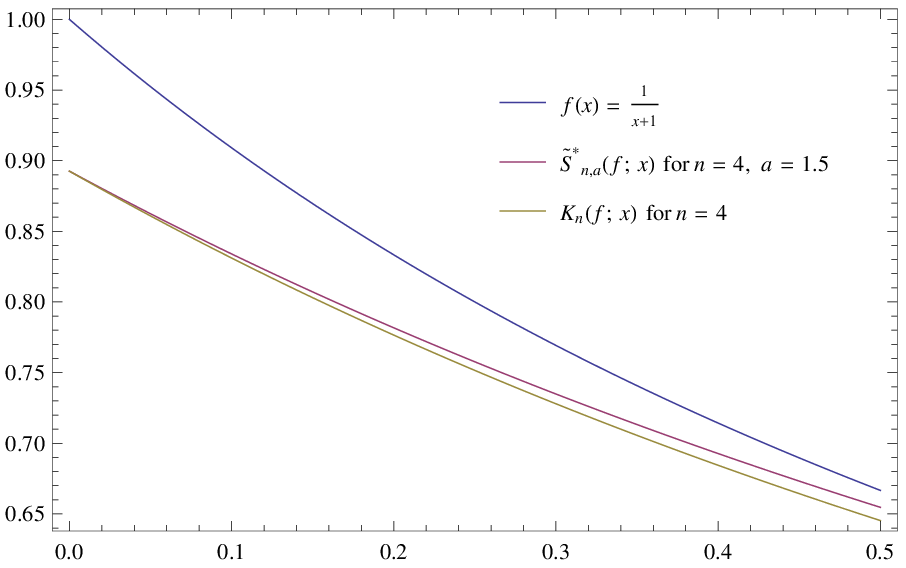}     \includegraphics[width=0.4\textwidth]{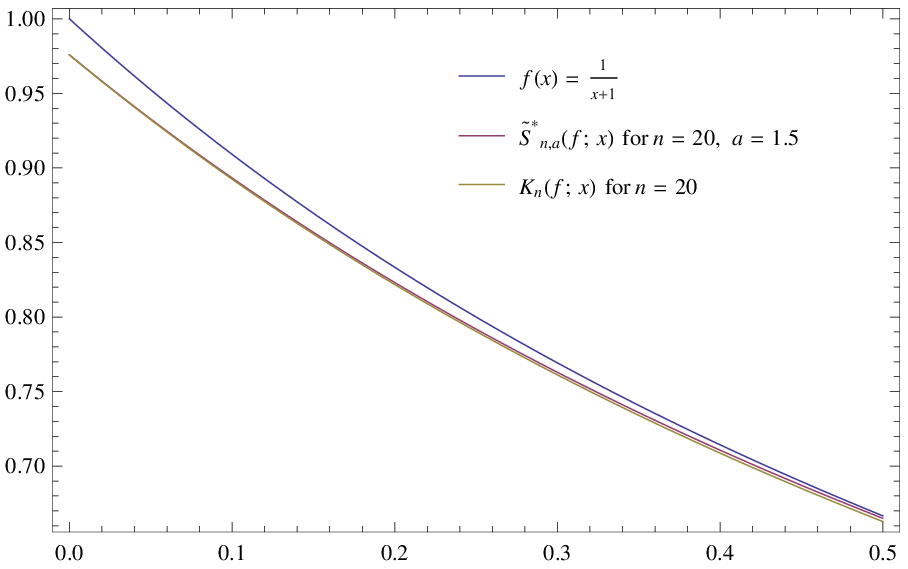}   
      \caption[Description in LOF, taken from~\cite{source}]{The comparison  of the operators $\tilde{S}_{n,a}^*(f;x)$ and $K_n(f;x)$}
    \label{F5}
\end{figure}
\end{example}
So, from Figure \ref{F5}, it can be observed that the summation-integral-type operators (\ref{NO}) are approaching more faster than Sz$\acute{\text{a}}$sz-Mirakjan-Kantorovich operators, but for large value of $n$, both operators converge to the function $f(x)=\frac{1}{1+x}$.
\begin{example}
Consider $n=5$ and function $f(x)=\cos{\pi x}$ then the approaching of the operators $\tilde{S}_{n,a}^*(f;x)$ is faster than $K_n(f;x)$.
\begin{figure}[htb]
    \centering 
  \includegraphics[width=0.5\textwidth]{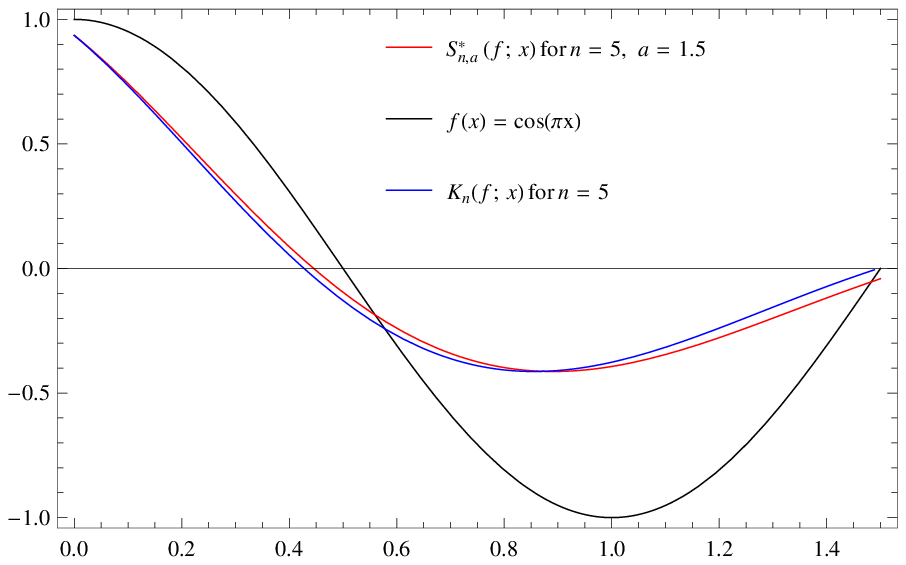}     
      \caption[Description in LOF, taken from~\cite{source}]{The comparison  of the operators $\tilde{S}_{n,a}^*(f;x)$ and $K_n(f;x)$}
    \label{F6}
\end{figure}
\end{example}
By observing to Figure \ref{F6}, it can be found that the better rate of convergence is taking place in the summation-integral-type operators $\tilde{S}_{n,a}^*(f;x)$ rather than  Sz$\acute{\text{a}}$sz-Mirakjan-Kantorovich operators but ultimately they converge to the given function $f(x)=\cos(\pi x)$.

\section{ An extension in the sense of bivariate operators}
To study the approximation properties for the function of two variables, we generalize as an extension to the above operators \ref{NO} into bivariate operators in the space of integral functions to  investigate the rate of convergence with the help of the statistical convergence.
Let $f:C[0,\infty)\times C[0,\infty)\to C[0,\infty)\times C[0,\infty)$, and we define the operators with one parameter as follows:
\begin{eqnarray}\label{NO1}
Y_{m,m,a}^*(f;x,y)=m^2\sum\limits_{k_1=0}^{\infty}\sum\limits_{k_2=0}^{\infty} s_{m,m}^a(x,y)\int\limits_{\frac{k_2}{m}}^{\frac{k_2+1}{m}}\int\limits_{\frac{k_1}{m}}^{\frac{k_1+1}{m}}f(u,v)~du~dv,
\end{eqnarray}
where $s_{m,m}^a(x,y)=a^{\left(\frac{-x-y}{-1+a^{\frac{1}{m}}}\right)}\frac{x^{k_1}y^{k_2}(\log{a})^{k_1+k_2}}{(-1+a^{\frac{1}{m}})^{k_1+k_2}k_1!k_2!}=s_m^a(x)\times s_m^a(y)$.\\

Let us define a function $ e_{i,j}=x^iy^j$, for all $x,y\geq 0$, where $i,j\in \mathbb{N}\cup \{0\}$.
\begin{lemma}\label{L5}
For all $x,~y\geq0$, the bivariate operators \ref{NO1}, satisfy the following equalities: 
\begin{eqnarray*}
&&1.~Y_{m,m,a}^*(e_{00};x,y)=1\\
&&2.~Y_{m,m,a}^*(e_{11};x,y)=\frac{1}{4(-1+a^{\frac{1}{m}})m^2}\Big\{\left(-1+a^{\frac{1}{n}}+2x\log{a} \right)\left(-1+a^{\frac{1}{n}}+2y\log{a} \right) \Big\} \\
&&3.~Y_{m,m,a}^*(e_{22};x,y)=\frac{1}{9(-1+a^{\frac{1}{m}})^4m^4}\Big\{\left( \left(-1+a^{\frac{1}{m}} \right)^2+6(-1+a^{\frac{1}{m}})x\log+3x^2(\log)^2 \right)\\
&&\hspace{3.5cm} \left(\left(-1+a^{\frac{1}{m}} \right)^2+6(-1+a^{\frac{1}{m}})y\log+3y^2(\log)^2 \right) \Big\}\\
&&4.~Y_{m,m,a}^*(e_{33};x,y)=\frac{1}{16\left(-1+a^{\frac{1}{m}} \right)^6m^6}\Big\{\Big(\left(-1+a^{\frac{1}{m}} \right)^3+14\left(-1+a^{\frac{1}{m}} \right)^2x\log{a}+18\left(-1+a^{\frac{1}{m}} \right)x^2(\log{a})^2 \\
&&\hspace{3.5cm}+4x^3(\log{a})^3 \Big) \Big(\left(-1+a^{\frac{1}{m}} \right)^3+14\left(-1+a^{\frac{1}{m}} \right)^2y\log{a}+18\left(-1+a^{\frac{1}{m}} \right)y^2(\log{a})^2 \\ 
&&\hspace{3.5cm}+4y^3(\log{a})^3 \Big) \Big\}.
\end{eqnarray*}
\end{lemma}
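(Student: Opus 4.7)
The plan is to exploit the tensor product structure of the bivariate operator, reducing every computation to the univariate moments already listed in Lemma \ref{L1}. Since $s_{m,m}^a(x,y)=s_m^a(x)\,s_m^a(y)$ and the double integral over the rectangle $[\tfrac{k_1}{m},\tfrac{k_1+1}{m}]\times[\tfrac{k_2}{m},\tfrac{k_2+1}{m}]$ separates in each variable, I would first establish the basic factorisation identity
\begin{eqnarray*}
Y_{m,m,a}^*(g(u)h(v);x,y)
 &=& \Bigl(m\sum_{k_1=0}^{\infty}s_m^a(x)\!\!\int_{k_1/m}^{(k_1+1)/m}\!\!g(u)\,du\Bigr)\\
 & & \times\Bigl(m\sum_{k_2=0}^{\infty}s_m^a(y)\!\!\int_{k_2/m}^{(k_2+1)/m}\!\!h(v)\,dv\Bigr)\\
 &=& \tilde{S}_{m,a}^{*}(g;x)\cdot \tilde{S}_{m,a}^{*}(h;y),
\end{eqnarray*}
valid whenever the double series converges absolutely. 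This is immediate from Fubini/Tonelli once the product structure of the kernel is written out explicitly.

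With this in hand, each assertion reduces to applying Lemma \ref{L1} to the test function $e_{i,j}(u,v)=u^{i}v^{j}$, so that
\begin{eqnarray*}
Y_{m,m,a}^{*}(e_{i,j};x,y)=\tilde{S}_{m,a}^{*}(e_{i};x)\cdot\tilde{S}_{m,a}^{*}(e_{j};y).
\end{eqnarray*}
For part 1 this is just $1\cdot 1=1$. For parts 2, 3, 4 I would take the expressions for $\tilde{S}_{m,a}^{*}(e_{i};\cdot)$ given in Lemma \ref{L1} with $i=1,2,3$, put them over the common denominators $2m(-1+a^{1/m})$, $3m^{2}(-1+a^{1/m})^{2}$ and $4m^{3}(-1+a^{1/m})^{3}$ respectively, and multiply the $x$- and $y$-factors.

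The arguments are essentially routine once the factorisation is in place; the only step that requires any care is bookkeeping. The main potential pitfall is the algebraic simplification: each of parts 3 and 4 involves multiplying two trinomial/quadrinomial numerators in $\log a$ and $(-1+a^{1/m})$, so it is easy to drop a power of $(-1+a^{1/m})$ in the overall denominator. I would therefore do the normalisation explicitly, writing, for example,
\begin{eqnarray*}
\tilde{S}_{m,a}^{*}(e_{2};x)=\frac{(-1+a^{\frac{1}{m}})^{2}+6(-1+a^{\frac{1}{m}})x\log a+3x^{2}(\log a)^{2}}{3m^{2}(-1+a^{\frac{1}{m}})^{2}},
\end{eqnarray*}
and similarly for $e_{3}$, before taking the product with the analogous expression in $y$. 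Combining numerators and denominators then yields exactly the stated formulas for $e_{11}$, $e_{22}$, $e_{33}$, completing the lemma.
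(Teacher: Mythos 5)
Your proposal is correct and follows essentially the same route as the paper: the paper likewise exploits the product kernel $s_{m,m}^a(x,y)=s_m^a(x)\,s_m^a(y)$ to factor the double sum and double integral into a product of two univariate expressions, computes parts 1 and 2 explicitly, and dismisses parts 3 and 4 with ``similarly.'' Your explicit common-denominator bookkeeping is in fact a useful check, since it shows that the denominator in part 2 of the statement should be $4\left(-1+a^{\frac{1}{m}}\right)^{2}m^{2}$ (and the stray occurrences of $a^{\frac{1}{n}}$ there should read $a^{\frac{1}{m}}$), exactly the kind of dropped power you warn about.
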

\begin{proof}
 To prove (1) of the above Lemma \ref{L5}, we put $f(x,y)=e_{00}=1$ in the above bivariate operators (\ref{NO1}) and we have 
\begin{eqnarray*}
1.~Y_{m,m,a}^*(e_{00};x,y) &=& m^2\sum\limits_{k_1=0}^{\infty}\sum\limits_{k_2=0}^{\infty} s_{m,m}^a(x,y)\int\limits_{\frac{k_2}{m}}^{\frac{k_2+1}{m}}\int\limits_{\frac{k_1}{m}}^{\frac{k_1+1}{m}}~du~dv\\ 
&=& \left(\sum\limits_{k_1=0}^{\infty} s_m^a(x)\right)\left(\sum\limits_{k_2=0}^{\infty}s_m^a(y) \right)\\ 
&=&1.\\
\end{eqnarray*} 
\begin{eqnarray*}
2.~ Y_{m,m,a}^*(e_{11};x,y) &=& m^2\sum\limits_{k_1=0}^{\infty}\sum\limits_{k_2=0}^{\infty} s_{m,m}^a(x,y)\int\limits_{\frac{k_2}{m}}^{\frac{k_2+1}{m}}\int\limits_{\frac{k_1}{m}}^{\frac{k_1+1}{m}}uv~du~dv\\
&=& \frac{m^2}{4} \Big(\sum\limits_{k_1=0}^{\infty}s_m^a(x)\left(\frac{1}{m^2}+\frac{2k_1}{m} \right)  \Big)\Big(\sum\limits_{k_2=0}^{\infty}s_m^a(y)\left(\frac{1}{m^2}+\frac{2k_2}{m} \right) \Big)\\ 
&=& \frac{1}{4(-1+a^{\frac{1}{m}})m^2}\Big\{\left(-1+a^{\frac{1}{m}}+2x\log{a} \right)\left(-1+a^{\frac{1}{m}}+2y\log{a} \right) \Big\} 
\end{eqnarray*}
Similarly, it can be proved for the other equalities. 
\end{proof}
\begin{lemma}
For all $x,y\geq0$ and $m\in\mathbb{N}$, we have
\begin{eqnarray*}
&&1.~Y_{m,m,a}^*((u-x);x,y)=-\frac{(-1+2mx)}{2m}+\frac{x\log{a}}{m(-1+a^{\frac{1}{m}})},\\
&&2.~Y_{m,m,a}^*((v-y);x,y)=-\frac{(-1+2ny)}{2n}+\frac{y\log{a}}{n(-1+a^{\frac{1}{n}})},\\
&&3.~Y_{m,m,a}^*((u-x)^2;x,y)= \frac{(1-3mx+3m^2x^2)}{3m^2}-\frac{2(-1+a^{\frac{1}{m}})(-1+mx)x\log{a}}{\left(-1+a^{\frac{1}{m}}\right)^2m^2}+\frac{x^2(\log{a})^2}{\left(-1+a^{\frac{1}{m}}\right)^2m^2},\\
&&4.~Y_{m,m,a}^*((v-y)^2;x,y)= \frac{(1-3ny+3n^2y^2)}{3n^2}-\frac{2(-1+a^{\frac{1}{n}})(-1+ny)y\log{a}}{\left(-1+a^{\frac{1}{n}}\right)^2n^2}+\frac{y^2(\log{a})^2}{\left(-1+a^{\frac{1}{n}}\right)^2n^2}
\end{eqnarray*}
\end{lemma}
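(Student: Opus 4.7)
The proof plan hinges on the product structure of the bivariate kernel. Since the hypothesis gives $s_{m,m}^a(x,y) = s_m^a(x)\,s_m^a(y)$ and the double integral factorises, any integrand of the form $g(u)h(v)$ separates:
\begin{eqnarray*}
Y_{m,m,a}^*(g(u)h(v);x,y) = \Bigl(m\sum_{k_1=0}^\infty s_m^a(x)\!\!\int_{k_1/m}^{(k_1+1)/m}\!\!g(u)\,du\Bigr)\Bigl(m\sum_{k_2=0}^\infty s_m^a(y)\!\!\int_{k_2/m}^{(k_2+1)/m}\!\!h(v)\,dv\Bigr),
\end{eqnarray*}
which is exactly $\tilde{S}_{m,a}^*(g;x)\cdot\tilde{S}_{m,a}^*(h;y)$. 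So the first step is to verify this factorisation carefully from the definition (\ref{NO1}), using Fubini to split the double integral and the fact that the kernel is a product.

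Once tensorisation is in hand, each of the four identities reduces to a one–variable computation already recorded in Lemma \ref{L2}. For (1), take $g(u)=u-x$ and $h(v)=1$; then $Y_{m,m,a}^*((u-x);x,y)=\tilde{S}_{m,a}^*(\xi_x;x)\cdot\tilde{S}_{m,a}^*(e_0;y)$, and since $\tilde{S}_{m,a}^*(e_0;y)=1$ by Lemma \ref{L1}, the value equals $\tilde{S}_{m,a}^*(\xi_x;x)$, which is precisely the formula in item~1 of Lemma \ref{L2}. Item (2) is symmetric, obtained with $g\equiv 1$ and $h(v)=v-y$. Items (3) and (4) are handled the same way with $g(u)=(u-x)^2$, $h\equiv 1$ and vice versa, invoking item~2 of Lemma \ref{L2}.

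There is essentially no analytical obstacle here; the work is bookkeeping. The only minor hazard to guard against is making sure that the $n$ appearing on the right–hand sides of items (2) and (4) is read as $m$ (the operator is $Y_{m,m,a}^*$, so both indices in the bivariate sum are $m$); this appears to be a harmless typographical inconsistency in the statement, and the proof delivers the correct expression in $m$ automatically from Lemma \ref{L2}. After writing out the factorisation once, I would simply cite Lemma \ref{L2} four times to close the argument, noting in each case which factor collapses to $1$ via $\tilde{S}_{m,a}^*(e_0;\cdot)=1$.
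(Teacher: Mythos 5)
Your proposal is correct and matches the paper's intent exactly: the paper omits the proof, remarking only that the identities follow from the one-variable equalities of \cite{RY1} (i.e.\ Lemmas \ref{L1} and \ref{L2}), and your tensorisation of the product kernel $s_{m,m}^a(x,y)=s_m^a(x)\,s_m^a(y)$ followed by four applications of Lemma \ref{L2} is precisely the argument being alluded to. Your observation that the $n$'s in items (2) and (4) should read $m$ is also right.
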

\begin{proof}
One can prove the above all equalities with the help of equalities, which are proved in  \cite{RY1}. So we omit the proof.
\end{proof}
\section{Rate of convergence of bivariate operators}
In this section, we find  rate of convergence of the biivariate operators \ref{NO1}, for function of two variables.

Now, we define the supremum norm, by letting $X=[0,\infty)\times[0,\infty)$, we have
$$\|f\|=\underset{x,y\in X}\sup|f(x,y)|,~~~~f\in C_B(X).$$

Consider the  modulus of continuity $\omega(f;\delta_1,\delta_2)$ for the bivariate operators \ref{NO1}, where $\delta_1,\delta_2>0$ and is defined by:
\begin{eqnarray}
\omega(f;\delta_1,\delta_2)= \{\sup |f(u,v)-f(x,y)|:(u,v),~(x,y)\in X,~\text{and}~|u-x|\leq\delta_1,~|v-y|\leq\delta_2 \}.
\end{eqnarray}
\begin{lemma}\label{L4}
Let $f\in C_B(X)$, then for $\delta_1,~\delta_2>0$,  we have the following properties of modulus of continuity: \\
1.  For given function $f$, $\omega(f;\delta_1,\delta_2)\to 0$ as $\delta_1,\delta_2\to 0$.\\
2. $|f(u,v)-f(x,y)|\leq\omega(f;\delta_1,\delta_2)\left(1+\frac{|u-x|}{\delta_1} \right)\left(1+\frac{|v-y|}{\delta_2} \right)$.\\
Fore more details, see \cite{GAA}.
 \end{lemma}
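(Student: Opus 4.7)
The plan is to treat the two assertions separately, both by straightforward adaptations of the classical univariate modulus-of-continuity arguments to the bivariate setting; both are essentially the two-variable analogues of facts routinely proved for $\omega(f;\delta)$, so the reference to \cite{GAA} is fitting.

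For part (1), I would invoke the hypothesis that $f\in C_B(X)$ is uniformly continuous on $X=[0,\infty)\times[0,\infty)$ (the standing assumption in this context; otherwise one restricts to compact subsets). Given $\epsilon>0$, pick $\eta>0$ such that $\sqrt{(u-x)^2+(v-y)^2}<\eta$ forces $|f(u,v)-f(x,y)|<\epsilon$. Then for any $(\delta_1,\delta_2)$ with $\sqrt{\delta_1^2+\delta_2^2}<\eta$, every pair $(u,v),(x,y)$ entering the supremum defining $\omega(f;\delta_1,\delta_2)$ automatically satisfies the Euclidean proximity condition, so $\omega(f;\delta_1,\delta_2)\le\epsilon$. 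Letting $\delta_1,\delta_2\to0^+$ and then $\epsilon\to 0$ finishes (1).

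For part (2), I would decompose via an intermediate point as
$$|f(u,v)-f(x,y)|\le|f(u,v)-f(x,v)|+|f(x,v)-f(x,y)|.$$
For the first term, set $n=\lceil|u-x|/\delta_1\rceil$, partition $[x,u]$ into $n$ equal subintervals of length $\le\delta_1$, and use the triangle inequality: each of the $n$ consecutive increments is bounded by $\omega(f;\delta_1,\delta_2)$ since the first coordinate shifts by at most $\delta_1$ while the second is held fixed (so the $\delta_2$-clearance is trivially met). This yields $|f(u,v)-f(x,v)|\le n\,\omega(f;\delta_1,\delta_2)\le(1+|u-x|/\delta_1)\omega(f;\delta_1,\delta_2)$. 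The symmetric argument in the $v$-variable gives $|f(x,v)-f(x,y)|\le(1+|v-y|/\delta_2)\omega(f;\delta_1,\delta_2)$. Adding these produces an additive estimate of the form $(2+|u-x|/\delta_1+|v-y|/\delta_2)\omega(f;\delta_1,\delta_2)$, which one then upgrades to the claimed multiplicative form by a short case analysis: when $|u-x|\le\delta_1$ and $|v-y|\le\delta_2$ the product $(1+|u-x|/\delta_1)(1+|v-y|/\delta_2)$ is $\ge 1$ and the definition of $\omega$ applies directly, while otherwise at least one of the two factors exceeds $2$ so that the product dominates the sum.

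The main obstacle is precisely this last step, namely converting the naturally additive estimate produced by univariate chaining into the multiplicative form stated in the lemma. In \cite{GAA} this is handled as a folklore inequality for the bivariate modulus; alternatively one can obtain it directly by a grid-chaining argument along the rectangle with vertices $(x,y),(u,y),(x,v),(u,v)$, taking care with the ceiling counts in each coordinate. Neither ingredient is deep, but both must be stated carefully; the payoff is the pointwise estimate that will be applied in the next section to bound $|Y^*_{m,m,a}(f;x,y)-f(x,y)|$ in terms of the bivariate central second moments.
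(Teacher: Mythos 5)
The paper offers no proof of this lemma at all --- it is stated with a bare citation to \cite{GAA} --- so your attempt is measured against the standard argument rather than anything in the text. Your part (1) is fine, and you are right to flag that $f\in C_B(X)$ alone does not suffice on the unbounded set $X=[0,\infty)\times[0,\infty)$ (e.g.\ $f(x,y)=\sin(x^2)$ is bounded and continuous but $\omega(f;\delta,\delta)\equiv 2$ for every $\delta>0$); uniform continuity must be assumed, as it is in \cite{GAA}.

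Part (2), however, has a genuine gap. Writing $a=|u-x|/\delta_1$ and $b=|v-y|/\delta_2$, your coordinate-by-coordinate chaining yields $|f(u,v)-f(x,y)|\le(\lceil a\rceil+\lceil b\rceil)\,\omega(f;\delta_1,\delta_2)$, and you propose to upgrade this additive bound to the multiplicative one by arguing that, outside the trivial box $a\le 1$, $b\le 1$, ``the product dominates the sum.'' That is false: $(1+a)(1+b)-(2+a+b)=ab-1$, which is negative whenever $ab<1$, and the ceilinged version fails as well --- take $a=1.1$, $b=0.1$, where your chaining bound is $3\,\omega$ while the claimed bound is $(2.1)(1.1)\,\omega=2.31\,\omega$. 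So the one-coordinate-at-a-time decomposition cannot deliver the product form. The standard fix is to chain along the straight segment from $(x,y)$ to $(u,v)$, moving \emph{both} coordinates simultaneously: with $N=\max\left(1+\lfloor a\rfloor,\,1+\lfloor b\rfloor\right)$ equal steps, each step moves the first coordinate by $a\delta_1/N\le\delta_1$ and the second by $b\delta_2/N\le\delta_2$, whence
\[
|f(u,v)-f(x,y)|\le N\,\omega(f;\delta_1,\delta_2)\le\left(1+\lfloor a\rfloor\right)\left(1+\lfloor b\rfloor\right)\omega(f;\delta_1,\delta_2)\le\left(1+a\right)\left(1+b\right)\omega(f;\delta_1,\delta_2).
\]
This is presumably what your closing remark about a ``grid-chaining argument'' gestures at, but as written your main route does not close.
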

\begin{theorem}
If $f\in C_B(X)$ and $x, y\in [0,\infty)$, then we have
\begin{eqnarray}
|Y_{m,m,a}^*(f;x,y)-f(x,y)|\leq 4\omega(f;\sqrt{\delta_{m,a}},\sqrt{\delta_{m,a}'}),
\end{eqnarray}
where
\begin{eqnarray}
&& \delta_{m,a}=\Bigg(\frac{(1-3mx+3m^2x^2)}{3m^2}-\frac{2(-1+a^{\frac{1}{m}})(-1+mx)x\log{a}}{\left(-1+a^{\frac{1}{m}}\right)^2m^2}+\frac{x^2(\log{a})^2}{\left(-1+a^{\frac{1}{m}}\right)^2m^2} \Bigg),\label{1}\\
&& \delta_{m,a}'=\Bigg(\frac{(1-3my+3m^2y^2)}{3m^2}-\frac{2(-1+a^{\frac{1}{m}})(-1+my)y\log{a}}{\left(-1+a^{\frac{1}{m}}\right)^2m^2}+\frac{y^2(\log{a})^2}{\left(-1+a^{\frac{1}{m}}\right)^2m^2} \Bigg).\label{2}
\end{eqnarray}
\end{theorem}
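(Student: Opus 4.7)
The plan is to mimic the one-dimensional modulus-of-continuity argument already carried out earlier in the paper, but apply it in the tensor-product setting given by the kernel factorization $s_{m,m}^a(x,y)=s_m^a(x)s_m^a(y)$. First I would exploit linearity and positivity to push the absolute value inside:
\begin{equation*}
|Y_{m,m,a}^*(f;x,y)-f(x,y)|\leq Y_{m,m,a}^*\bigl(|f(u,v)-f(x,y)|;x,y\bigr),
\end{equation*}
using the fact that $Y_{m,m,a}^*(e_{00};x,y)=1$ from Lemma \ref{L5}. Then I would apply part~2 of Lemma \ref{L4} with free parameters $\delta_1,\delta_2>0$ to bound the integrand by
\begin{equation*}
\omega(f;\delta_1,\delta_2)\left(1+\tfrac{|u-x|}{\delta_1}\right)\left(1+\tfrac{|v-y|}{\delta_2}\right).
\end{equation*}

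Next I would expand the product into four terms ($1$, $|u-x|/\delta_1$, $|v-y|/\delta_2$, $|u-x||v-y|/(\delta_1\delta_2)$) and apply $Y_{m,m,a}^*$ termwise. The key observation is that because the kernel factors as a product of one-variable kernels, the bivariate operator acts as a tensor product on separable integrands; in particular $Y_{m,m,a}^*(g(u)h(v);x,y)=\tilde{S}_{m,a}^*(g;x)\cdot\tilde{S}_{m,a}^*(h;y)$. So the mixed term reduces cleanly to $\tilde{S}_{m,a}^*(|u-x|;x)\tilde{S}_{m,a}^*(|v-y|;y)$, while the other two are one-variable absolute first moments in the remaining coordinate.

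Each of the absolute first moments would then be bounded by the square root of the corresponding second central moment via Cauchy--Schwarz combined with $Y_{m,m,a}^*(1;x,y)=1$, giving $\tilde{S}_{m,a}^*(|u-x|;x)\leq\sqrt{\delta_{m,a}}$ and $\tilde{S}_{m,a}^*(|v-y|;y)\leq\sqrt{\delta_{m,a}'}$, with $\delta_{m,a}$, $\delta_{m,a}'$ as in \eqref{1}--\eqref{2} (these are exactly $Y_{m,m,a}^*((u-x)^2;x,y)$ and $Y_{m,m,a}^*((v-y)^2;x,y)$ from the preceding lemma, matching (2) of Lemma \ref{L2}). Collecting terms yields
\begin{equation*}
|Y_{m,m,a}^*(f;x,y)-f(x,y)|\leq\omega(f;\delta_1,\delta_2)\left(1+\tfrac{\sqrt{\delta_{m,a}}}{\delta_1}+\tfrac{\sqrt{\delta_{m,a}'}}{\delta_2}+\tfrac{\sqrt{\delta_{m,a}\,\delta_{m,a}'}}{\delta_1\delta_2}\right).
\end{equation*}

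Finally I would choose $\delta_1=\sqrt{\delta_{m,a}}$ and $\delta_2=\sqrt{\delta_{m,a}'}$, which makes each of the four summands equal to $1$ and delivers the constant $4$ in the stated bound. The only step that requires any genuine care is the mixed cross term $|u-x||v-y|$: one must either invoke the tensor-product structure explicitly or apply Cauchy--Schwarz in the double sum/double integral against the joint kernel. I expect this to be the main (though still routine) obstacle, since the rest of the argument is just Lemma \ref{L4} plus bookkeeping.
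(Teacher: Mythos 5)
Your proposal is correct and follows essentially the same route as the paper's proof: apply part~2 of the bivariate modulus-of-continuity lemma, bound the absolute first moments by the square roots of the second central moments via Cauchy--Schwarz, and choose $\delta_1=\sqrt{\delta_{m,a}}$, $\delta_2=\sqrt{\delta_{m,a}'}$ to obtain the constant $4$. The only difference is presentational --- you expand the product into four terms and justify the mixed term via the kernel factorization $s_{m,m}^a(x,y)=s_m^a(x)s_m^a(y)$, whereas the paper keeps the bound in product form (implicitly relying on the same tensor-product structure); your version is, if anything, slightly more careful on that point.
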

\begin{proof}
By using the linearity and the positivity of the defined operators  $Y_{m,m,a}^*(f;x,y)$ (\ref{NO1}) and applying on  (2) of the Lemma (\ref{L4}), then for any $\delta_1,\delta_2>0$, we have
\begin{eqnarray*}
&&|Y_{m,m,a}^*(f;x,y)-f(x,y)|\\
&&\hspace{2cm}\leq Y_{m,m,a}^*(|f(t,s)-f(x,y)|;x,y)\\
&&\hspace{2cm}\leq\omega(f;\delta_1,\delta_2)\left(1+\frac{1}{\delta_1}Y_{m,m,a}^*(|u-x|;x,y) \right)\times\left(1+\frac{1}{\delta_2}Y_{m,m,a}^*(|v-y|;x,y) \right)\\
&&\hspace{2cm}\leq \omega(f;\delta_1,\delta_2)\left(1+\frac{1}{\delta_1}\left( Y_{m,m,a}^*((u-x)^2;x,y)\right)^{\frac{1}{2}} \right)\times\left(1+\frac{1}{\delta_2}\left( Y_{m,m,a}^*((v-y)^2;x,y)\right)^{\frac{1}{2}} \right)\\
&&\hspace{9cm}\text{(using the Cauchy-Schwarz inequality)}.
\end{eqnarray*}
 Next one step will complete the proof. 
\end{proof}
At last, we shall see the rate of convergence of the bivariate operators (\ref{NO1}) in the sense of functions belonging to the Lipschitz class $\text{Lip}_\mathcal{M}(\alpha_1,\alpha_2)$, where $\alpha_1,\alpha_2\in (0,1]$ and $\mathcal{M}\geq0$ is any constant and is defined by:
\begin{eqnarray}
|f(u,v)-f(x,y)|\leq \mathcal{M}|u-x|^{\alpha_1}|v-y|^{\alpha_2},~~~~\forall~x,y,u,v\in [0,\infty).
\end{eqnarray}

Our next approach to prove the theorem for finding the rate of convergence, when the function is belonging to the Lipschitz class.

\begin{theorem}
Let $f\in \text{Lip}_\mathcal{M}(\alpha_1,\alpha_2) $ then for each $f\in C_B(X)$, we have 
\begin{eqnarray*}
|Y_{m,m,a}^*(f;x,y)-f(x,y)|\leq \mathcal{M}\delta_{m,a}^{\frac{\alpha_1}{2}}\delta_{m,a}'^{\frac{\alpha_2}{2}},
\end{eqnarray*}
where $\delta_{m,a}$ and $\delta_{m,a}'$ are defined by (\ref{1}) and (\ref{2}) respectively.
\end{theorem}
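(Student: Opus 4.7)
The plan is to reduce the bivariate statement to the univariate Lipschitz estimate already established for $\tilde{S}_{m,a}^*$, exploiting the fact that the bivariate kernel factorises as $s_{m,m}^a(x,y)=s_m^a(x)\,s_m^a(y)$. The starting point is linearity and positivity of $Y_{m,m,a}^*$ together with $Y_{m,m,a}^*(e_{00};x,y)=1$, which give
$$|Y_{m,m,a}^*(f;x,y)-f(x,y)| \le Y_{m,m,a}^*\bigl(|f(u,v)-f(x,y)|;x,y\bigr),$$
and then the Lipschitz hypothesis lets us bound the integrand pointwise by $\mathcal{M}|u-x|^{\alpha_1}|v-y|^{\alpha_2}$.

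Next I would observe that because the kernel, the summation indices, and the region of integration all decouple in $(u,k_1)$ versus $(v,k_2)$, the bivariate operator applied to a product integrand factors as a product of univariate operators:
$$Y_{m,m,a}^*\bigl(|u-x|^{\alpha_1}|v-y|^{\alpha_2};x,y\bigr) = \tilde{S}_{m,a}^*\bigl(|u-x|^{\alpha_1};x\bigr)\,\tilde{S}_{m,a}^*\bigl(|v-y|^{\alpha_2};y\bigr).$$
For each of the two univariate factors the preceding one-variable Lipschitz theorem applies essentially verbatim: use H\"older's inequality inside $\tilde{S}_{m,a}^*$ with conjugate exponents $\mathsf{p}=2/\alpha_i$, $\mathsf{q}=2/(2-\alpha_i)$, and then collapse the $\mathsf{q}$-piece via $\tilde{S}_{m,a}^*(e_0;\cdot)=1$, to obtain
$$\tilde{S}_{m,a}^*\bigl(|u-x|^{\alpha_1};x\bigr) \le \bigl(\tilde{S}_{m,a}^*((u-x)^2;x)\bigr)^{\alpha_1/2} = \delta_{m,a}^{\alpha_1/2},$$
and similarly $\tilde{S}_{m,a}^*(|v-y|^{\alpha_2};y) \le (\delta_{m,a}')^{\alpha_2/2}$, where the identification of $\delta_{m,a}$ and $\delta_{m,a}'$ with the second central moments is exactly the content of Lemma \ref{L2}.

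Multiplying the two displays then produces the required bound
$$|Y_{m,m,a}^*(f;x,y)-f(x,y)| \le \mathcal{M}\,\delta_{m,a}^{\alpha_1/2}\,(\delta_{m,a}')^{\alpha_2/2}.$$
I do not expect any serious obstacle: the substantive work was already done in the univariate Lipschitz theorem. The only point requiring a brief verification is the tensorisation identity for $Y_{m,m,a}^*$ on product integrands, and that is immediate from Fubini once one writes $s_{m,m}^a(x,y)=s_m^a(x)s_m^a(y)$ and separates the double sum and the double integral into a product of one-dimensional summations and one-dimensional integrals.
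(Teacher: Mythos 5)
Your proposal is correct and follows essentially the same route as the paper: both bound $|Y_{m,m,a}^*(f;x,y)-f(x,y)|$ by the Lipschitz condition, factor the product integrand $|u-x|^{\alpha_1}|v-y|^{\alpha_2}$ through the tensor structure of the kernel $s_{m,m}^a(x,y)=s_m^a(x)s_m^a(y)$, and apply H\"older's inequality with exponents $2/\alpha_i$ and $2/(2-\alpha_i)$ to each factor to reach the second central moments $\delta_{m,a}$ and $\delta_{m,a}'$. The only cosmetic difference is that you route each factor through the already-proved univariate Lipschitz theorem, whereas the paper applies H\"older directly to the bivariate operator; the underlying computation is identical.
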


\begin{proof}
Since defined bivariate  operators $Y_{m,m,a}^*(f;x,y)$ are linear positive and also $f\in \text{Lip}_\mathcal{M}(\alpha_1,\alpha_2)$, where $\alpha_1,~\alpha_2\in (0,1]$, then we have 
\begin{eqnarray*}
&& |Y_{m,m,a}^*(f;x,y)-f(x,y)|\leq Y_{m,m,a}^*(|f(t,s)-f(x,y)|;x,y)|\\
&&\hspace{3.5cm} \leq Y_{m,m,a}^*(\mathcal{M}|u-x|^{\alpha_1}|v-y|^{\alpha_2};x,y)|\\
&&\hspace{3.5cm} = \mathcal{M} Y_{m,m,a}^*(|u-x|^{\alpha_1};x,y)|\times Y_{m,m,a}^*(|v-y|^{\alpha_2};x,y)|  
\end{eqnarray*}
Applying the H$\ddot{o}$lder inequality with $\mathsf{p'}=\frac{2}{\alpha_1},~\mathsf{q'}=\frac{2}{2-\alpha_1}$ and $\mathsf{p''}=\frac{2}{\alpha_2},~\mathsf{q''}=\frac{2}{2-\alpha_2}$  we have
\begin{eqnarray*}
&& |Y_{m,m,a}^*(f;x,y)-f(x,y)|\leq \mathcal{M} (Y_{m,m,a}^*(u-x)^2;x,y))^{\frac{\alpha_1}{2}}\times(Y_{m,m,a}^*(v-y)^2;x,y))^{\frac{\alpha_2}{2}}\\
&& \hspace{3.5cm}= \mathcal{M}\delta_{m,a}^{\frac{\alpha_1}{2}}\delta_{m,a}'^{\frac{\alpha_2}{2}}.
\end{eqnarray*}
Thus the proof is completed.
\end{proof}

\subsection{Graphical approach of bivariate operators}
Now we shall see that, the convergence of the bivaiate operators  defined by (\ref{NO1}) to the function $f(x,y)$ will be presented by graphical representation.
\begin{example}
Let $f\in C(X)$ and choose $m=5~10,~20$, $a=3$ (fixed),  the convergence of $Y_{m,m,a}^*(f;x,y)$ to the function $f(x,y)$ (blue)  takes place  and is illustrated in Figure \ref{F7}. For the different values of $m$, the corresponding operators $Y_{5,5,a}^*(f;x,y)$, $Y_{10,10,a}^*(f;x,y)$ and $Y_{20,20,a}^*(f;x,y)$ represent red, green and magenta colors respectively. 

\begin{figure}[h!]
    \centering 
  \includegraphics[width=0.82\textwidth]{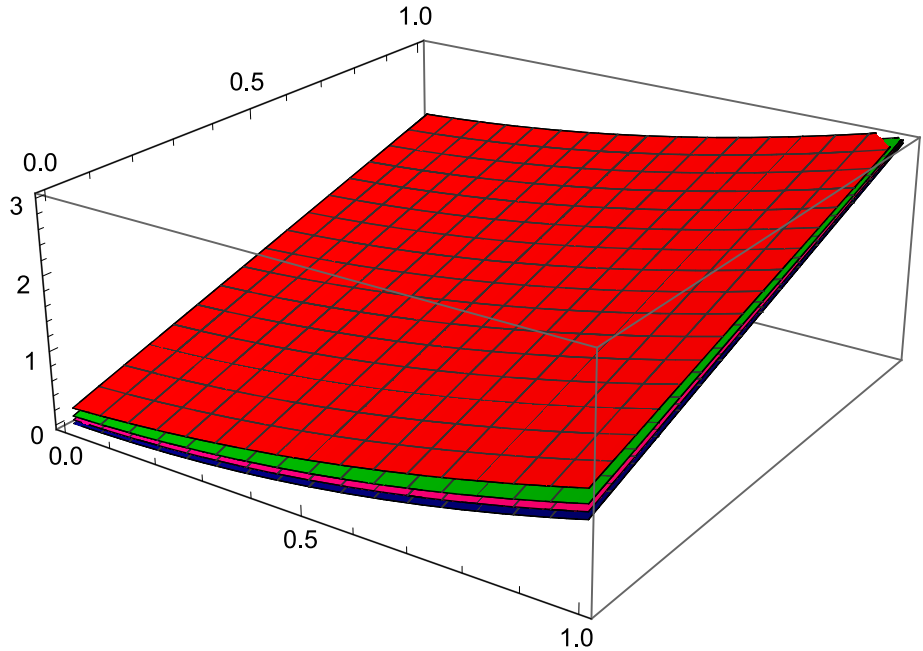}       
      \caption[Description in LOF, taken from~\cite{source}]{Convergence of the operators $Y_{m,m,a}^*(f;x)$ to $f(x,y)$}
    \label{F7}
\end{figure}
\pagebreak
Same here for, same function $f(x,y)$, but for large value of  $m=100,500$ and then corresponding operators are $Y_{100,100,a}^*(f;x,y)$  (red) and $Y_{500,500,a}^*(f;x,y)$ (green)  almost overlap to the function $f(x,y)$ (blue), which is illustrated in Figure \ref{F8}.
\begin{figure}[h!]
    \centering 
  \includegraphics[width=0.82\textwidth]{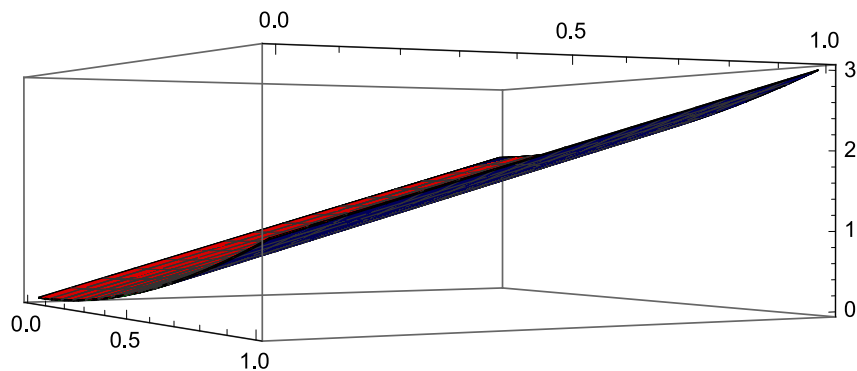}       
      \caption[Description in LOF, taken from~\cite{source}]{Convergence of the operators $Y_{m,m,a}^*(f;x)$ to $f(x,y)$}
    \label{F8}
\end{figure}
\end{example}
\textbf{Concluding remark:} The convergence of the bivariate operators 
$Y_{m,m,a}^*(f;x)$ to the function $f(x,y)$ is taking place as if we increase the value of $m$, i.e. for the large value of $m$, the bivariate operators converge to the function. \\

\textbf{Conclusion:} Convergence of the proposed operators \ref{NO} via statistical sense and order of approximation have been determined, moreover the weighted statistical convergence properties  and the rate of statistical convergence have been investigated in some sense of local approximation results with the help of modulus of continuity. To support the approximation results, the graphical representations took place and along with to stable of the proposed operators (\ref{NO}), a  comparison has been shown and obtained the better rate of convergence. An extension is got for study of the rate of convergence in bivariate sense and graphical analysis has been taken place.


\begin{thebibliography}{10}
\bibitem{AA} Aral A, Do$\breve{g}$ru O. Bleimann, Butzer, and Hahn operators based on the-integers. Journal of Inequalities and Applications. 2008 Dec 1;2007(1):079410.
\bibitem{GAA} Anastassiou GA, Gal SG. Approximation Theory. Moduli of Continuity and Global Smoothness Preservation, Birkh$\ddot{a}$ user, Boston, 2000.
\bibitem{HA} Aktu$\breve{g}$lu H, $\ddot{O}$zarslan MA, Duman O. Matrix summability methods on the approximation of multivariate $q$-MKZ operators. Bull. Malays. Math. Sci. Soc.(2). 2011 Jan 1;34(3):465-74.
\bibitem{JC} Connor J, Kline J. On statistical limit points and the consistency of statistical convergence. Journal of mathematical analysis and applications. 1996 Jan 1;197(2):392-9.
\bibitem{JC1} Connor J, Swardson MA. Strong integral summability and the Stone-$\breve{\text{C}}$ech compactification of the half-line. Pacific Journal of Mathematics. 1993 Feb 1;157(2):201-24.
\bibitem{JC2} Connor J, Ganichev M, Kadets V. A characterization of Banach spaces with separable duals via weak statistical convergence. Journal of Mathematical Analysis and Applications. 2000 Apr 1;244(1):251-61.
\bibitem{OD1} Duman O. Statistical approximation for periodic functions. Demonstratio Mathematica. 2003 Oct 1;36(4):873-8.
\bibitem{OD} Duman O, Khan MK, Orhan C. $A$-statistical convergence of approximating operators. Mathematical Inequalities and applications. 2003 Oct 1;6:689-700.
\bibitem{SE} Ersan S, Do$\breve{g}$ru O. Statistical approximation properties of q-Bleimann, Butzer and Hahn operators. Mathematical and Computer Modelling. 2009 Apr 1;49(7-8):1595-606.
\bibitem{HF} Fast H. Sur la convergence statistique. InColloquium Mathematicae 1951 (Vol. 2, No. 3-4, pp. 241-244).
\bibitem{AD} Gadjiev AD, Orhan C. Some approximation theorems via statistical convergence. The Rocky Mountain Journal of Mathematics. 2002 Apr 1:129-38.
\bibitem{GA1} Gadzhiev AD. The convergence problem for a sequence of positive linear operators on unbounded sets, and theorems analogous to that of PP Korovkin. InDoklady Akademii Nauk 1974 (Vol. 218, No. 5, pp. 1001-1004). Russian Academy of Sciences.
\bibitem{GA2} Gadjiev AD, On PP. Korovkin type theorems, Math. Zametki 20 (1976), no. 5, 781–786. Math. Notes. 1976;20:5-6.
\bibitem{VG1} Gupta V, Radu C. Statistical approximation properties of $q$-Baskakov-Kantorovich operators. Open Mathematics. 2009 Dec 1;7(4):809-18.
\bibitem{PP} Korovkin PP. Linear operators and the theory of approximation. India, Delhi. 1960.
\bibitem{IJ1} Maddox IJ. Statistical convergence in a locally convex space. In Mathematical Proceedings of the Cambridge Philosophical Society 1988 Jul (Vol. 104, No. 1, pp. 141-145). Cambridge University Press.
\bibitem{NM} Mahmudov N, Sabancigil P. A $q$-analogue of the Meyer-K$\bar{o}$nig and Zeller operators. Bull. Malays. Math. Sci. Soc.(2). 2012 Jan 1;35(1):39-51.
\bibitem{HI} Miller HI. A measure theoretical subsequence characterization of statistical convergence. Transactions of the American Mathematical Society. 1995;347(5):1811-9.
\bibitem{RY1} Mishra VN, Yadav R. Some estimations of summation-integral-type operators. Tbilisi Mathematical Journal. 2018;11(3):175-91.
\bibitem{MO} $\ddot{O}$rkc$\ddot{u}$ M, Do$\breve{g}$ru O. Weighted statistical approximation by Kantorovich type $q$-Sz$\acute{\text{a}}$sz-Mirakjan operators. Applied Mathematics and Computation. 2011 Jun 15;217(20):7913-9.
\bibitem{CR} Radu C. Statistical approximation properties of Kantorovich operators based on $q$-integers, Creat. Math. Inform. 2008;17(2):75-84.
\bibitem{IJ}  Schoenberg IJ. The integrability of certain functions and related summability methods. The American Mathematical Monthly. 1959 May 1;66(5):361-775.
\bibitem{HS} Steinhaus H. Sur la convergence ordinaire et la convergence asymptotique. In Colloq. Math 1951 (Vol. 2, No. 1, pp. 73-74).
\bibitem{VT1} Totik V. Approximation by Sz$\acute{\text{a}}$sz-Mirakjan-Kantorovich operators in $L^{p},~(p>1)$. Analysis Mathematica. 1983 Jun 1;9(2):147-67. 
\bibitem{AZ} Zygmund A. Trigonometric Series. Vol. I, II. Reprint of the 1979 edition. Cambridge Mathematical Library. Cambridge Univ. Press, Cambridge. 1988;1:95.
\end{thebibliography}
\end{document}